\def\odd{{\rm \, odd}}
\newtheorem{theorem}{Theorem}
\newtheorem{lemma}[theorem]{Lemma}
\newtheorem{corollary}[theorem]{Corollary}
\newtheorem{proposition}[theorem]{Proposition}
\newtheorem{question}[theorem]{Question}
\newtheorem{definition}{Definition}
\newcommand{\seqnum}[1]{\href{https://oeis.org/#1}{\rm \underline{#1}}}
\title{The sum of the reciprocals of the prime divisors of an odd perfect or odd primitive non-deficient number}
\author{Joshua Zelinsky\footnote{Hopkins School, New Haven, CT, USA, zelinsky@gmail.com}}
\date{}
\begin{document}

\maketitle

\begin{abstract}  Write $H(n) = \prod_{p \mid n}p/(p-1)$ where the product is over the prime divisors of $n$. Define $T(n)$ as the sum of the reciprocals of the primes dividing $n$. We prove new bounds for $T(n)$ and $H(n)$ in terms of the smallest prime factor of $n$, under the assumption that $n$ is an odd perfect number. Some of the results also apply under the weaker assumption that $n$ is odd and primitive non-deficient.
    
\end{abstract}

\section{Introduction}

Let $\sigma(n)$ be the sum of the positive divisors of $n$. 
A number is said to be perfect if $\sigma(n)=2n$. For example, $\sigma(6)= 1+2+3+6=12=2(6)$. 

Perfect numbers form sequence \seqnum{A000396} in the OEIS. An old unsolved problem is whether there is any odd integer $n$ which is perfect.  

Given a positive integer $n$, we will write $T(n)$ as the sum of the reciprocals of the primes which divide $n$. For example, $T(12) = 1/2 +1/3= 5/6$. A closely related function is 

$$H(n) = \prod_{p \mid n}\frac{p}{p-1}$$ where the product is over the prime divisors of $n$. The main focus of this paper is to prove new bounds for $T(n)$ and $H(n)$ in terms of the smallest prime factor of $n$, under the assumption that $n$ is an odd perfect number. Some of the results also apply to all odd non-deficient numbers.

The function $T(n)$ is closely related to the arithmetic derivative of $n$. The arithmetic derivative is a function $D(n)$ defined by analogy to the usual derivative from calculus. The function $D(n)$  is the unique function on the natural numbers with the properties that $D(p)=1$ for any prime $p$ and for any integers $m$ and $n$,  one has $D(mn)=D(m)n+mD(n)$.  When $n$ is squarefree, $nT(n) = D(n)$ is the arithmetic derivative of $n$. The function $nT(n)$ is always an integer and is sequence \seqnum{A069359} in the OEIS. The function was brought to wider attention article by Barbeau \cite{Barbeau} but predates Barbeau and apparently has multiple independent discoveries both before and after Barbeau. Tossavainen, Haukkanen,  Merikoski and Mattila  wrote a survey \cite{THMM} article discussing $D(n)$ as well as its pre-Barbeau history.  There has been a large amount of recent work on $D(n)$. Notable work includes Pasten \cite{Pasten1} who used it to give an alternate proof that there are infinitely many primes. Pasten \cite{Pasten2} also created a broad set of generalizations of $D(n)$ which turn out to be closely connected to the ABC conjecture. Recently, Fan and Utev \cite{FanUtev} connected $D(n)$ to an analog of the Lie bracket for arithmetic functions.  Murashka, Goncharenko and Goncharenko \cite{MurashkaGoncharenkoGoncharenko} gave a generalization of the arithmetic derivative to general unique factorization domains.  Finally, Merikoski,  Haukkanen, and Tossavainen \cite{MHT} inspired by the analogy of $D(n)$ to the usual derivative studied the equivalent of differential equations involving $D(n)$. 

We will write $h(n) =\sigma(n)/n$. Note that we also have that $$h(n) = \sum_{d \mid n} \frac{1}{d}.$$ In the literature, $h(n)$ often referred as the {\it{abundancy index}} of $n$.  A number is said to be {\it abundant} when $h(n) >2$, and said to be {\it deficient} when $h(n) <2$. For many purposes, it is useful to group perfect and abundant numbers together as non-deficient numbers. It is not hard to show that 

\begin{equation} h(ab) \geq h(a), \label{basic h inequality}    
\end{equation} with equality if and only if $b=1$. Thus, if $n$ is perfect or abundant, then $mn$ is abundant for any $m >1$.  

Inequality \eqref{basic h inequality} motivates the following definition.
\begin{definition} 
A {\it {primitive non-deficient number}} is a number $n$ which is non-deficient and which has no proper divisor $d$ of $n$ where $d$ is non-deficient.   \end{definition}

The primitive non-deficient numbers form sequence \seqnum{A006039} in the OEIS. All perfect numbers are primitive non-deficient, and thus theorems about odd primitive non-deficient numbers are also theorems about odd perfect numbers. Importantly, while we do not know if there are any odd perfect numbers, there are many odd primitive non-deficient numbers, with $945$ being the smallest example. 

Inequality \eqref{basic h inequality} also leads to the observation that no perfect number can be divisible by any other perfect or abundant number. This observation is frequently used to restrict what the prime divisors of an odd perfect number can look like. For example, it is not hard to show that if $n$ is an odd perfect number then $105 \nmid n$, and similarly that $(3)(5)(11)(13)\nmid n$. 

 One of the basic properties of $h(n)$ is that $h(n) \leq H(n)$ with equality if and only if $n=1$. In fact, $H(n)$ is also the best possible function which bounds $h(n)$ from above and which only keeps track of which primes divide $n$ but not of their multiplicity. In particular,

$$\lim_{k \rightarrow \infty} h(n^k) =H(n).$$

When $n$ is perfect, one has $h(n)=2$, and thus 
$H(n)> 2$. When $n$ is perfect, $H(n)$ is a natural function to estimate. 

Prior work on estimating $T(n)$ and $H(n)$ when $n$ is an odd perfect goes back to a 1958 paper of Perisastri \cite{Perisastri} who showed that if $n$ is an odd perfect number, then 
$$ \frac{1}{2} < T(n) < 2\log \frac{\pi}{2}.$$

Subsequently, tighter bounds were proven in a series of papers by Cohen, Hagis, and Suryanarayana \cite{Cohen 1978,Cohen 1980, Suryanarayana II, Suryanarayana III, Suryanarayana and Hagis}, which included the result that $T(n) < \log 2$.  

We summarize their results in the Proposition below.

\begin{proposition}(Cohen, Hagis, and Suryanarayana \cite{Cohen 1978,Cohen 1980, Suryanarayana III, Suryanarayana and Hagis}) If $n$ is an odd perfect number, then $T(n)$ is bounded as given in the table below, broken down by the greatest common divisor of $n$ and $15$ being as listed.
\begin{center}
\begin{tabular}{ |c|c|c| } 
 \hline
 $\gcd(15,n)$ & Lower bound for $T(n)$ & Upper bound for $T(n)$ \\ 
\hline
$1$  &   $0.667450$ & $0.693148$  \\
 $3$ &    $0.603831$ & $0.657304$  \\
 $5$  &    $ 0.647387$ & $0.678036$  \\ 
 $15$  &   $0.596063$ & $0.673770$   \\
 \hline
\end{tabular}
\end{center}
\end{proposition}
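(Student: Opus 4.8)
The plan is to convert the multiplicative constraint $h(n)=2$ into an additive one and to read off both bounds from a single exact identity. Writing $a_p$ for the exponent of $p$ in $n$ and using $\sigma(p^{a})/p^{a} = \frac{p}{p-1}\bigl(1 - p^{-(a+1)}\bigr)$, I would take logarithms of $2 = h(n) = \prod_{p \mid n} \sigma(p^{a_p})/p^{a_p}$ to obtain $\log H(n) = \log 2 - \sum_{p \mid n}\log\bigl(1 - p^{-(a_p+1)}\bigr)$, which already recovers $H(n)>2$. Combining this with the elementary expansion $-\log(1-1/p) = 1/p + \sum_{k\ge 2} (kp^k)^{-1}$, which relates $\log H(n)$ to $T(n)$, yields the key identity
\[
T(n) = \log 2 + \sum_{p\mid n}\Bigl[\,-\log\bigl(1 - p^{-(a_p+1)}\bigr) + \log(1-1/p) + \tfrac1p\,\Bigr].
\]
Every entry in the table then becomes a matter of estimating the bracketed per-prime quantity, which I will call $c(p,a_p)$; note that the upper entry $0.693148$ in the $\gcd(15,n)=1$ row is just $\log 2 = 0.693147\ldots$ rounded up.

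The crucial structural input is Euler's classical theorem that an odd perfect number has the form $n = q^{\alpha}m^{2}$ with $q \equiv \alpha \equiv 1 \pmod 4$; thus every prime other than $q$ occurs to an even exponent $\ge 2$, and $q \ge 5$. A direct estimate shows $c(p,a) < 0$ whenever $a \ge 2$ (the term $-\log(1-p^{-(a+1)})$ is then at most $-\log(1-p^{-3})$, which is dominated by $\log(1-1/p)+1/p$), while for the single Euler prime $q$ the quantity $c(q,\alpha)$ is positive but bounded above by a small explicit constant, maximized at $\alpha = 1$, $q = 5$. Hence $T(n) - \log 2$ is a sum of negative terms plus one small controlled positive term.

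For the lower bounds I would use $H(n) > 2$ together with the relation $T(n) = \log H(n) + \sum_{p\mid n}\bigl(\log(1-1/p)+1/p\bigr)$: since each summand is negative and the full sum over all odd primes converges, one gets $T(n) > \log 2 + \sum_{p \text{ odd}}(\log(1-1/p)+1/p)$, which I would sharpen in each case by restricting the sum to the primes actually forced to be present or absent. For the upper bounds the game is to show the negative contributions of the many even-exponent primes outweigh the lone positive Euler term; this requires a lower bound on how many small primes must divide $n$, supplied by $H(n) > 2$ and by the non-divisibility constraints noted earlier (e.g.\ $105 \nmid n$ and $3\cdot 5\cdot 11\cdot 13 \nmid n$), which rule out the configurations that would otherwise push $T(n)$ too high.

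Finally, the four rows correspond to the four possibilities for which of $3$ and $5$ divide $n$, since $1/3$ and $1/5$ are the largest available reciprocals and fixing $\gcd(15,n)$ pins down the dominant terms $c(3,a_3)$ and $c(5,a_5)$. Here $3 \equiv 3 \pmod 4$ always occurs to an even power, so $c(3,a_3)\le c(3,2)<0$ gives a definite downward push whenever $3\mid n$, whereas $5 \equiv 1 \pmod 4$ may itself be the Euler prime, in which case $c(5,1)>0$ is the largest positive contribution the identity can produce. I expect the main obstacle to be exactly this optimization: establishing in each of the four cases, to the six-digit precision quoted, precisely which admissible multiset of prime-power factors extremizes $\sum_{p}c(p,a_p)$ subject to $h(n)=2$, to Euler's form, and to the exclusion of the forbidden small-prime products. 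This is a finite but intricate case analysis rather than a single clean inequality, and it is where the bulk of the work in the cited papers resides.
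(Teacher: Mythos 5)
First, a point of comparison: the paper itself offers no proof of this proposition; it is quoted as a summary of results from the four cited papers of Cohen, Hagis, and Suryanarayana, so there is no internal argument to measure yours against. That said, your exact identity $T(n)=\log 2+\sum_{p\mid n}c(p,a_p)$ with $c(p,a)=-\log\bigl(1-p^{-(a+1)}\bigr)+\log(1-1/p)+1/p$ is correct, and your qualitative observations --- that $c(p,a)<0$ for $a\ge 2$, that only the special (Euler) prime can contribute positively, and that the $0.693148$ entry is just $\log 2$ rounded up --- are all sound and do match the skeleton of the arguments in the cited literature.

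The gap is that the proposition consists of eight explicit six-decimal constants and your proposal derives none of them; the ``finite but intricate case analysis'' you defer is not a detail but essentially the entire content of the statement. Moreover, the lower-bound route as you literally describe it is quantitatively insufficient: bounding $\sum_{p\mid n}\bigl(-\log(1-1/p)-1/p\bigr)$ by the corresponding sum over \emph{all} primes $\ge 7$ gives about $0.0273$, hence only $T(n)>0.6658$, which falls short of the claimed $0.667450$ in the $\gcd(15,n)=1$ row. To reach the tabulated constants one needs the further constraint $2=h(n)\ge\prod_{p\mid n}(1+1/p)$ (together with exclusions such as $105\nmid n$ and $3\cdot5\cdot11\cdot13\nmid n$), which caps how many small primes can simultaneously divide $n$ and forces the computer-assisted enumeration over admissible sets of small prime divisors that occupies the cited papers. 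So the framework is the right one, but the proof of the stated bounds is not actually present.
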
.

Note that the $0.693148$ in the case where $\gcd(15,n)=1$ is due to the aforementioned bound that $T(n) < \log 2$.   

Similarly, bounds on $H(n)$ were proved by Suryanarayana \cite{Suryanarayana III}. These bounds can also broken down into the same major cases.

\begin{proposition}(Suryanarayana \cite{Suryanarayana III}) If $n$ is an odd perfect number, then $H(n)$ is bounded according to the greatest common divisor of $n$ and 15 as follows. \label{Sur H upper bound}
\begin{center}
\begin{tabular}{ |c|c| } 
 \hline
 $\gcd(15,n)$ & Upper bound for $H(n)$ \\ 
\hline
$1$  &  $2.014754$  \\
 $3$ &    $2.096234$  \\
 $5$  &     $2.031002$  \\ 
 $15$  &   $2.165439$   \\
 \hline
\end{tabular}
\end{center}
\end{proposition}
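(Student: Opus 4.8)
The plan is to reduce everything to the exact identity that, for a perfect number, ties $H(n)$ to the exponents appearing in $n$. Since $n$ is perfect we have $h(n)=2$, and for each prime power $p^{a}\| n$ one computes $\frac{p/(p-1)}{\sigma(p^{a})/p^{a}}=\frac{p^{a+1}}{p^{a+1}-1}$, using $(p-1)\sigma(p^{a})=p^{a+1}-1$. Multiplying over all $p\mid n$ gives
\[
H(n)=2\prod_{p^{a}\| n}\frac{p^{a+1}}{p^{a+1}-1}=2\prod_{p^{a}\| n}\frac{1}{1-p^{-(a+1)}}.
\]
Each factor exceeds $1$ and is decreasing in both $p$ and $a$, so an upper bound for $H(n)$ amounts to lower bounds on the exponents together with control of which small primes can occur. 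Here I would invoke Euler's classical structure theorem: writing $n=q^{\alpha}m^{2}$ with $q\equiv\alpha\equiv 1\pmod 4$ and $q\nmid m$, every prime other than the special prime $q$ occurs to an even exponent $\ge 2$, while $q$ occurs to an odd exponent $\ge 1$. Thus every ordinary prime contributes a factor at most $\frac{1}{1-p^{-3}}$, and $q$ at most $\frac{1}{1-q^{-2}}$. The value of $\gcd(15,n)$ fixes which of $3,5$ lie in the support, hence the admissible prime set and, since $q\equiv 1\pmod 4$, the candidate values of $q$.

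The next step is to bound the product over the largest admissible configuration. Because the factors decay like $1+p^{-3}$, the product over \emph{all} admissible primes (with the minimal exponents above) converges and dominates $H(n)/2$ regardless of which primes actually divide $n$; summing $\sum_{p}\log\frac{1}{1-p^{-(a_{p}+1)}}$ over the admissible primes and exponentiating already yields a bound in the right ballpark for each of the four cases. To reach Suryanarayana's exact constants one must sharpen this in two ways. First, the $2$-adic and small-modulus constraints on $\sigma(p^{\alpha})$ must be used to decide which primes can play the role of $q$: for instance, when $\gcd(15,n)=5$ the prime $5$ cannot be the special prime, since $\sigma(5^{\alpha})$ is divisible by $3$ for every odd $\alpha$ (as $5^{\alpha+1}\equiv 1\pmod 3$), which would force $3\mid n$ and contradict $3\nmid n$; hence $5$ carries an even exponent and only the much smaller factor $\frac{1}{1-5^{-3}}$. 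A similar observation shows $3\| n$ is impossible (because $\sigma(3)=4$ is too $2$-divisible, whereas $\sigma(n)=2n$ has $2$-adic valuation exactly $1$), so $3$ also carries an even exponent whenever $3\mid n$. These forced exponent increases are exactly what separates the four constants.

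The second, and harder, sharpening is to account for the fact that a perfect number cannot actually contain all small admissible primes at once. I would recast the problem as the optimization of maximizing $\log H(n)=\sum_{p\mid n}\log\frac{p}{p-1}$ subject to the exact constraint $\sum_{p\mid n}\log\frac{\sigma(p^{a})}{p^{a}}=\log 2$, where the admissible $(p,a)$ range over the prime powers allowed in the current case. This is a knapsack-type problem whose optimum is approached by greedily taking the smallest admissible primes; the constraint automatically caps the support (the partial products of $1+1/p+1/p^{2}$ already reach $2$ near $p\approx 60$ in the $\gcd(15,n)=1$ case), and the upper bounds on $T(n)$ from the preceding Proposition can be used to control the contribution of the remaining primes. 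The main obstacle is precisely this last balancing act: one must track the interaction between the single odd-exponent prime $q$ and the even-exponent primes, rule out the finitely many borderline small configurations by divisibility, and carry enough numerical precision through the tail estimates to certify the stated six-decimal constants. Everything else is bookkeeping around the identity above.
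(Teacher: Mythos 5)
The paper does not actually prove this proposition; it is quoted from Suryanarayana's \emph{On odd perfect numbers III}, and the only internal evidence of the method is the remark that the constants are rational multiples of $\zeta(3)$ together with Lemma \ref{zeta 3 lemma form}, which records exactly your key identity in the form $H(n)/h(n) \leq \frac{p_j^2}{p_j^2-1}\prod_{i \neq j}\frac{p_i^3}{p_i^3-1}$. So your skeleton is the right one: the identity $H(n) = 2\prod_{p^a \| n} p^{a+1}/(p^{a+1}-1)$ is correct, the exponent lower bounds from Euler's theorem are the right input, and your observations that $3$ can never be the special prime and that $5$ cannot be special when $3 \nmid n$ (since $3 \mid \sigma(5^{\alpha})$ for odd $\alpha$) are both valid and are indeed part of what separates the four cases.

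The genuine gap is that the argument as outlined does not reach the stated constants, and the shortfall is not mere bookkeeping. Carrying out your first reduction in the case $\gcd(15,n)=1$ gives $H(n) \leq 2\cdot\frac{q^2}{q^2-1}\prod_{p \geq 7,\, p \neq q}\frac{p^3}{p^3-1} \leq 2\cdot\frac{7}{8}\zeta(3)\cdot\frac{26}{27}\cdot\frac{124}{125}\cdot\bigl(1+\tfrac{1}{13\cdot 14}\bigr) \approx 2.0205$, which overshoots the claimed $2.014754$. So Suryanarayana necessarily uses further structural input --- restrictions on which small primes and which special primes can co-occur, obtained from non-deficiency of subproducts and congruence conditions on $\sigma(p^a)$ --- and this is precisely the part your proposal leaves as an unexecuted ``knapsack-type'' optimization. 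That reformulation is also in tension with the one concrete fact the paper gives you: a greedy finite-support optimization would not produce clean rational multiples of $\zeta(3)$, whereas complete Euler products over all primes above a threshold do. To certify six-decimal constants you would need to identify and verify the specific finite list of excluded configurations for each of the four cases; without that, the proof establishes weaker constants than those in the table.
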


The upper bounds in the table in Proposition \ref{Sur H upper bound} are rounded up from the actually proven quantities, all of which are rational multiples of $\zeta(3)$ where $\zeta(s)$ is the Riemann zeta function.

Throughout the rest of this paper, unless stated otherwise, we will assume that n is a positive integer and its canonical factorization is given as t  $n=p_1^{a_1} p_2^{a_2} \cdots p_k^{a_k}$ where the $p_i$ are primes where  $p_1 < \cdots < p_k$.

When $n$ is non-deficient, the surplus $S(n)$ of $n$ is defined by by $S(n)=H(n)-2$. 
The surplus played a major role in related prior work by the author\cite{Zelinsky component compared to radical}. Using this notation, Lemma 6 from that paper becomes:

\begin{lemma}\label{Generalized Puchta lemma}(Zelinsky) Let $n$ be a primitive non-deficient number with $n=p_1^{a_1} \cdots p_k^{a_k}$ with primes $p_1 <  \ldots < p_k$. Let $R=\mathrm{rad}(n) = p_1p_2\cdots p_k$. Assume further that $$S(n) \geq  \alpha$$ for some positive $\alpha <1$ Then there exists an $i$, $1 \leq i \leq k$ such that 
$$p_i^{a_i+1} < \max\{\frac{2\left(k+2+p_1\right)}{\alpha },k(k+1)\}.$$
\end{lemma}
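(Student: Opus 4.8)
The plan is to extract everything from the two hypotheses of the lemma: the primitivity of $n$ and the lower bound on the surplus. Primitivity gives that $n/p_1$ is deficient, so $h(n/p_1) < 2$. Using $\sigma(p^b)/p^b = \frac{p}{p-1}\left(1 - p^{-(b+1)}\right)$ and collecting the factor belonging to $p_1$ (whose exponent drops from $a_1$ to $a_1-1$), this reads
$$h(n/p_1) = H(n)\left(1 - \frac{1}{p_1^{a_1}}\right)\prod_{j=2}^{k}\left(1 - \frac{1}{p_j^{a_j+1}}\right) < 2.$$
On the other hand $S(n) \ge \alpha$ is exactly $H(n) \ge 2+\alpha$. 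Dividing the displayed inequality by $H(n)$ and using $H(n)\ge 2+\alpha$ yields
$$\left(1 - \frac{1}{p_1^{a_1}}\right)\prod_{j=2}^{k}\left(1 - \frac{1}{p_j^{a_j+1}}\right) < \frac{2}{2+\alpha},$$
so the complementary quantity exceeds $\alpha/(2+\alpha)$.

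Next I would linearize. By the elementary inequality $1 - \prod_i(1-x_i) \le \sum_i x_i$, valid for all $x_i\in[0,1]$,
$$1 - \left(1 - \frac{1}{p_1^{a_1}}\right)\prod_{j=2}^{k}\left(1 - \frac{1}{p_j^{a_j+1}}\right) \le \frac{1}{p_1^{a_1}} + \sum_{j=2}^{k}\frac{1}{p_j^{a_j+1}}.$$
Let $q = \min_i p_i^{a_i+1}$. Since $p_1^{a_1} = p_1^{a_1+1}/p_1 \ge q/p_1$ we have $1/p_1^{a_1}\le p_1/q$, while each remaining term is at most $1/q$; hence the right side is at most $(p_1 + k - 1)/q$. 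Comparing with the lower bound $\alpha/(2+\alpha)$ from the previous step gives $\alpha/(2+\alpha) < (p_1+k-1)/q$, that is
$$q < \frac{(2+\alpha)(p_1+k-1)}{\alpha}.$$
This two-line combination is the clean heart of the argument, and already shows the smallest prime power $p_i^{a_i+1}$ is bounded by a quantity of the stated shape.

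What remains is to fit this bound inside the stated maximum, and this is where I expect the real work. Writing $\frac{(2+\alpha)(p_1+k-1)}{\alpha} = \frac{2(p_1+k-1)}{\alpha} + (p_1+k-1)$, the second summand is at most $6/\alpha$ precisely when $\alpha(p_1+k-1)\le 6$, and in that regime the bound collapses to $\frac{2(k+2+p_1)}{\alpha}$, the first entry of the maximum. The complementary regime, where $k$ is large compared with $1/\alpha$ and $k(k+1)$ is the governing term, is the delicate one: here the union bound above is lossy, and matching the target $k(k+1)$ likely requires instead retaining the product $\prod_{j=2}^{k}\left(1 - \frac{1}{p_j^{a_j+1}}\right) \ge (1 - 1/q)^{k-1}$ and controlling its discarded second-order terms, which is exactly what $q \ge k(k+1)$ guarantees (it forces $(k-1)/q$ and $p_1/q$ to be small). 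The key coupling I would exploit is that a primitive non-deficient number with these parameters forces $\left(\tfrac{p_1}{p_1-1}\right)^{k} \ge H(n) \ge 2+\alpha$: a large $p_1$ relative to $k$ makes this product small, hence forces $\alpha$ to be small, which makes $\frac{2(k+2+p_1)}{\alpha}$ large and returns one to the easy regime. Quantifying this trade-off so that the bound stays below $k(k+1)$ throughout the genuinely large-$k$ range is, I expect, the main technical obstacle; everything before it is routine.
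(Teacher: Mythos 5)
The paper itself does not prove this lemma: it is imported verbatim (as ``Lemma 6'') from the author's earlier paper on components versus radicals, so there is no internal proof to compare against and your attempt has to stand on its own. The first half of your argument is correct and clean: primitivity gives $h(n/p_1)<2$, the identity $h(n/p_1)=H(n)\bigl(1-p_1^{-a_1}\bigr)\prod_{j=2}^{k}\bigl(1-p_j^{-(a_j+1)}\bigr)$ is right (including when $a_1=1$), and combining it with $H(n)\geq 2+\alpha$ and the union bound gives, with $q=\min_i p_i^{a_i+1}$,
\[
\frac{\alpha}{2+\alpha}<\frac{p_1+k-1}{q},\qquad\text{i.e.}\qquad q<\frac{(2+\alpha)(p_1+k-1)}{\alpha}=\frac{2(p_1+k-1)}{\alpha}+(p_1+k-1).
\]
This is a correct and genuinely useful intermediate bound, and as you observe it lands inside the first entry of the maximum precisely when $\alpha(p_1+k-1)\leq 6$.

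The gap is the complementary regime $\alpha(p_1+k-1)>6$, which you explicitly leave open, and the repair you sketch for it cannot work. Retaining the second-order (or indeed all higher-order) terms of the product only upgrades the lower bound $\frac{\alpha}{2+\alpha}$ on $p_1x_1+\sum_{j\geq 2}x_j$ to roughly $\log\bigl(1+\tfrac{\alpha}{2}\bigr)$, and since $\frac{1}{\log(1+\alpha/2)}=\frac{2}{\alpha}+\frac{1}{2}+O(\alpha)$ this still produces $q<\frac{2(p_1+k-1)}{\alpha}+c(p_1+k-1)$ for some constant $c>0$: the additive overshoot remains of order $p_1+k$, not $6/\alpha$, so the obstruction reappears whenever $\alpha(p_1+k)$ is large. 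Nor does the Servais-type coupling you mention close it: $2+\alpha\leq H(n)\leq\frac{p_1+k-1}{p_1-1}$ only yields $p_1\leq\frac{k}{1+\alpha}+1$, and feeding that back into your bound still leaves it of size roughly $\frac{4}{3}k^2$ in the worst case, which exceeds $k(k+1)$ for $k\geq 4$. So the $k(k+1)$ branch of the stated maximum must be produced by a structurally different argument (one that actually exhibits a component with $p_i^{a_i+1}<k(k+1)$ when $\alpha$ is large relative to $1/(p_1+k)$), and nothing in the proposal supplies it. What you have proved is the correct but different statement $q<\frac{(2+\alpha)(p_1+k-1)}{\alpha}$, not the lemma as stated.
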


The main results of this paper are the following two propositions.

\begin{proposition} Assume $n$ is a primitive non-deficient number with prime factorization given by $n=p_1^{a_1}p_2^{a_2} \cdots p_k^{a_k}$ with primes $p_1$, $p_2$, $\ldots p_k$ and $p_1 <  \ldots < p_k$, and surplus $S=S(n)$.  Then $$T(n) \geq \log 2 -\frac{25}{64p_1} + \frac{S}{2} -\frac{S^2}{4}.$$ \label{first lower upper bound for T}
\end{proposition}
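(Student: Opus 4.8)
The plan is to convert the multiplicative quantity $H(n)=2+S$ into the additive quantity $T(n)$ through the logarithm, and then to bound the resulting error term using the smallest prime $p_1$ and the surplus $S$ separately. I would begin from the exact identity
$$T(n) = \log H(n) - \sum_{p \mid n}\left(\log\frac{p}{p-1} - \frac1p\right),$$
which follows by writing $\log H(n) = \sum_{p \mid n}\bigl(-\log(1-1/p)\bigr)$ and expanding each term as $\frac1p + \sum_{j\ge 2}\frac{1}{jp^j}$. Setting $g(p)=\log\frac{p}{p-1}-\frac1p=\sum_{j\ge2}\frac{1}{jp^j}\ge 0$ and using $H(n)=2+S$, this becomes $T(n)=\log(2+S)-\sum_{p\mid n}g(p)$, so the whole problem reduces to an upper bound on the ``second order'' prime sum $\sum_{p\mid n}g(p)$.

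Next I would apply elementary convexity: since $\log(1+x)\ge x-\frac{x^2}{2}$ for $x\ge 0$, we get $\log(2+S)=\log 2+\log(1+S/2)\ge \log 2+\frac{S}{2}-\frac{S^2}{8}$. This is already stronger than the $-\frac{S^2}{4}$ appearing in the statement, so there is a surplus of at least $\frac{S^2}{8}$ available to absorb the error term. Hence it suffices to prove $\sum_{p\mid n}g(p)\le \frac{25}{64p_1}+\frac{S^2}{8}$, and in fact a little more room is available because $\log(1+S/2)-\frac{S}{2}+\frac{S^2}{4}$ exceeds $\frac{S^2}{8}$ once $S$ is not tiny.

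I would then isolate the smallest prime, which is where the constant $\tfrac{25}{64}$ is pinned. For $p_1=2$ one checks the single numerical fact $g(2)=\log 2-\tfrac12=0.19314\ldots<\tfrac{25}{128}$; for $p_1\ge 3$ the crude estimate $g(p_1)\le\frac{1}{2p_1(p_1-1)}\le\frac{25}{64p_1}$ holds because $p_1-1\ge\frac{32}{25}$. Thus $g(p_1)\le\frac{25}{64p_1}$ in all cases, and it remains to bound the tail $\sum_{i\ge 2}g(p_i)$ by the surplus budget. For the tail I would use $g(p_i)\le\frac{1}{2p_i(p_i-1)}$ together with two consequences of $1+\sum x_i\le\prod(1+x_i)$ applied to $x_i=\frac{1}{p_i-1}$ for $i\ge 2$: namely $\sum_{i\ge 2}\frac1{p_i}\le \log\prod_{i\ge2}\frac{p_i}{p_i-1}=\log\!\bigl((2+S)\tfrac{p_1-1}{p_1}\bigr)$ and $\frac{1}{p_2-1}\le \prod_{i\ge2}\frac{p_i}{p_i-1}-1$. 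Combining these with $\frac{1}{p_i^2}\le\frac{1}{p_2}\cdot\frac1{p_i}$ gives $\sum_{i\ge2}\frac{1}{p_i^2}\le\frac{1}{p_2}\sum_{i\ge2}\frac1{p_i}$, and the point is that a small surplus forces the secondary primes $p_2<p_3<\cdots$ to be large, so the tail comes out quadratically small in $S$ and fits inside the budget.

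The hard part will be this last estimate. The constant $\tfrac{25}{64}$ is calibrated so tightly that for $p_1=2$ the slack $\tfrac{25}{128}-g(2)$ is only about $0.002$, which means the tail must be controlled almost exactly by $\tfrac{S^2}{8}$; the higher order terms $\sum_{j\ge3}\frac{1}{jp_i^j}$, as well as the gap between $\log(1+S/2)$ and $\frac{S}{2}$, must all be tracked carefully rather than discarded. A further subtlety is that the regimes $p_1=2$ (where $S$ can be as large as $1$ or more) and $p_1\ge 3$ (where $S$ is forced to be small but the tail $\sum_{i\ge2}g(p_i)$ need not be) draw on different sources of slack, so the tail bound will likely require splitting on the value of $p_1$ and comparing against the full budget $\log(1+S/2)-\frac{S}{2}+\frac{S^2}{4}+\frac{25}{64p_1}$ rather than just $\frac{S^2}{8}+\frac{25}{64p_1}$.
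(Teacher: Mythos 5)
Your opening moves coincide with the paper's: the identity $T(n)=\log(2+S)-\sum_{p\mid n}g(p)$ with $g(p)=\log\frac{p}{p-1}-\frac1p\ge 0$, and the bound $\log(2+S)\ge\log 2+\frac{S}{2}-\frac{S^2}{4}$ (your sharper $-\frac{S^2}{8}$ is also valid, and the reduction to $\sum_{p\mid n}g(p)\le \frac{25}{64p_1}+\log(1+S/2)-\frac{S}{2}+\frac{S^2}{4}$ is the correct target). The gap is in how you propose to reach that target. You spend the entire allowance $\frac{25}{64p_1}$ on the single term $g(p_1)$ and then try to fit the tail $\sum_{i\ge 2}g(p_i)$ into the surplus budget, on the grounds that a small surplus forces the secondary primes to be large so that the tail is quadratically small in $S$. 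That premise is false: $g(p_2)$ is bounded below by a fixed positive constant whenever $p_2$ is a fixed small prime, while $S$ can be small independently, so the tail is not $O(S^2)$. Concretely, for $n=945=3^3\cdot 5\cdot 7$ (the smallest odd primitive non-deficient number) one has $S=\frac{3}{16}$, the tail is $g(5)+g(7)\approx 0.0344$, while $\log(1+S/2)-\frac{S}{2}+\frac{S^2}{4}\approx 0.0047$; for $n=7425=3^3\cdot 5^2\cdot 11$ the mismatch is worse ($\approx 0.0275$ versus $\approx 0.0005$). So the step you defer as ``the hard part'' is not merely hard, it is unprovable in the form stated; the inequality only closes if you reclaim the unused slack $\frac{25}{64p_1}-g(p_1)$, i.e., if you abandon the term-by-term split.

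That is exactly what the paper does, and it is the one idea your plan is missing: $\frac{25}{64p_1}$ is calibrated to absorb the \emph{whole} second-order sum $\sum_{p\mid n}g(p)$, not just its first term, and the $S$-terms are never asked to absorb any part of the prime sum. Specifically, the paper proves $\log(1+\frac{1}{x-1})\le\frac{1}{x}+\frac{25}{32x^2}$ for integers $x\ge 2$ (Lemma~\ref{25/32 x squared lemma}), so that $\sum_{p\mid n}g(p)\le\frac{25}{32}\sum_{p\mid n}\frac{1}{p^2}$, and then bounds $\sum_{p\mid n}\frac{1}{p^2}$ by roughly $\frac{1}{2p_1}$ using that all the primes involved are odd and hence spaced by at least $2$; even this global absorption has essentially no room to spare, which is further evidence that diverting the whole $p_1$-allowance to $g(p_1)$ alone cannot succeed. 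Note also that the result is applied with $n$ odd (the paper's proof explicitly uses oddness of the primes), so your $p_1=2$ case, where you observe the constant is nearly exhausted by $g(2)$ alone, is outside the intended scope.
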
 
We also have the following upper bound for $T(n)$.
\begin{proposition} Assume $n$ is a primitive non-deficient number with $n=p_1^{a_1}p_2^{a_2} \cdots p_k^{a_k}$, and $p_1 <  \cdots < p_k$. Assume that the $a_i$ are even for all $i$ except some $t$ where $a_t$ is odd. Assume further that $\frac{p_t+2}{2} \geq p_1 \geq 11$. Then $$T(n) < \log 2 - \frac{11}{50p_1^2}.$$ \label{upper bound for T, between all prim defs and opns}
\end{proposition}

Proposition \ref{upper bound for T, between all prim defs and opns} will give a bound for $T(n)$ for $n$ when $n$ is an odd perfect number since any odd perfect number $n$ will satisfy the hypotheses of the proposition.  We will also prove slightly stronger but more complicated bounds, as well as bounds for $H(n)$. In particular, we prove the following.

\begin{proposition}
    If $n$ is an odd perfect number, then $$H(n) \leq 2 + \frac{9}{4p_1^2}.$$ \label{upper bound for H for an OPN}    
\end{proposition}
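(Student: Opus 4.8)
The plan is to start from the exact identity that perfection forces. Writing $n=\prod_i p_i^{a_i}$ and combining $H(n)=\prod_i p_i/(p_i-1)$ with $h(n)=\prod_i \sigma(p_i^{a_i})/p_i^{a_i}$ via the elementary identity $\sigma(p^a)/p^a=(1-p^{-(a+1)})\,p/(p-1)$, the quotient $H(n)/h(n)$ simplifies factor-by-factor to
$$\frac{H(n)}{h(n)} = \prod_i \frac{1}{1-p_i^{-(a_i+1)}},$$
so that $h(n)=2$ gives $H(n)/2 = \prod_i (1-p_i^{-(a_i+1)})^{-1}$ exactly. The surplus $S(n)=H(n)-2$ is then controlled entirely by bounding how far this product exceeds $1$.

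The key structural input is Euler's theorem: an odd perfect number has the form $\pi^e m^2$ with $\pi\equiv e\equiv 1\pmod 4$, so exactly one prime divisor (the special prime $\pi$) has odd exponent and every other exponent is even and at least $2$. Hence the factor attached to $\pi$ is at most $(1-\pi^{-2})^{-1}$ while every other factor is at most $(1-p^{-3})^{-1}$, which yields
$$\frac{H(n)}{2} \le \Bigl(\prod_{p\mid n}\frac{1}{1-p^{-3}}\Bigr)\frac{1-\pi^{-3}}{1-\pi^{-2}} = \Bigl(\prod_{p\mid n}\frac{1}{1-p^{-3}}\Bigr)\Bigl(1+\frac{1}{\pi(\pi+1)}\Bigr).$$
I would then enlarge the product to run over all primes $p\ge p_1$ (each omitted factor exceeds $1$) and replace $\pi$ by the smallest prime $\pi_0$ that is both $\ge p_1$ and $\equiv 1\pmod 4$. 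The congruence constraint on $\pi$ is indispensable: a direct check shows that if one instead allowed the special prime to sit at $p_1$ when $p_1\equiv 3\pmod 4$ (e.g.\ $\pi=p_1=11$), the resulting estimate already overshoots $2+\tfrac{9}{4p_1^2}$. It is precisely the fact that $\pi$ cannot be a prime $\equiv 3\pmod 4$ that pushes the dominant $(1-\pi^{-2})^{-1}$ factor out to a larger prime and saves the inequality in those residue classes.

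It then remains to prove $\bigl(\prod_{p\ge p_1}(1-p^{-3})^{-1}\bigr)\bigl(1+\tfrac{1}{\pi_0(\pi_0+1)}\bigr)\le 1+\tfrac{9}{8p_1^2}$. For the small cases $p_1\in\{3,5\}$ I would not estimate at all but instead quote Proposition \ref{Sur H upper bound}, which already gives $H(n)\le 2.166<2.25$ when $p_1=3$ and $H(n)\le 2.032<2.09$ when $p_1=5$. For $p_1\ge 7$ I would take logarithms, bound $-\log(1-x)\le x+x^2$, and reduce the whole statement to controlling the tail $\sum_{p\ge p_1}p^{-3}$ together with the correction term $1/(\pi_0(\pi_0+1))$.

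The hard part will be that the constant $\tfrac94$ is essentially optimal for this method: the inequality is tightest exactly when $p_1\equiv 1\pmod 4$ is small, namely $p_1=13$ and $p_1=17$, where the slack is only of order $10^{-4}$. I therefore expect the genuine work to be a rigorous rather than heuristic handling of the prime tail sum — isolating the primes below an explicit cutoff and summing them exactly, then bounding the remaining tail by comparison with a sum over integers coprime to $6$ (or $30$) so as to beat the required constant in the delicate range — followed by a decay argument for all larger $p_1$, using that $p_1^2\sum_{p\ge p_1}p^{-3}\to 0$ while the correction factor stays below $1$, so that the target inequality eventually holds with margin tending to $\tfrac18$.
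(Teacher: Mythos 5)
Your opening move is the same as the paper's: the identity $H(n)/h(n)=\prod_i\bigl(1-p_i^{-(a_i+1)}\bigr)^{-1}$, with the special prime's factor bounded by $(1-\pi^{-2})^{-1}$ and every other factor by $(1-p^{-3})^{-1}$, is exactly the content of Lemma~\ref{zeta 3 lemma form}. Where you diverge is the endgame. The paper routes through Lemma~\ref{Prelude to H bound}, bounding $\frac{1}{p_j^2}+\sum_{i\neq j}p_i^{-3}$ by $\frac{3}{4p_1^2}$ over the primes actually dividing $n$, which yields $H(n)\le h(n)\bigl(1+\frac{3}{4p_1^2}\bigr)=2+\frac{3}{2p_1^2}$ and leaves the stated constant $\frac94$ with slack; Euler's congruence is never invoked. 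You instead enlarge the product to \emph{all} primes $\ge p_1$, which throws away information and forces you to claw it back from the constraint $\pi\equiv 1\pmod 4$ together with Proposition~\ref{Sur H upper bound} for $p_1\in\{3,5\}$. Your reduction and your numerical claims check out (e.g.\ at $p_1=13$ the two sides of your target inequality are about $1.0065569$ versus $1.0066568$), but the margins at $p_1=17$ and also at $p_1=29$ (which you do not list) are only about $2\times 10^{-5}$, so the ``rigorous handling of the prime tail sum'' you defer is genuinely the bulk of the work rather than routine cleanup. What your route buys is robustness at the level of generality you chose; what it costs is that the constant $\frac94$ becomes essentially sharp for the method. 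A cheap way to restore breathing room, consistent with your framework, is the observation the paper itself uses in the proof of Corollary~\ref{OPN upper bound for T(n)}: if the special prime has exponent $1$ then $\odd(\pi+1)\mid n$, forcing $\pi\ge 2p_1-1$, while if its exponent is $\ge 5$ its factor is at most $(1-\pi^{-6})^{-1}$ and is harmless; either case pushes the dominant correction well below $\frac{1}{\pi_0(\pi_0+1)}$ and turns your delicate verification into an easy one.
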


Propositions of these sorts are of interest since if one had tight enough upper and lower bounds for $T(n)$ and $H(n)$, one would be able to prove results of the form  ``Any odd perfect number is divisible by a prime which is at most $K$'' for some constant $K$. However, while this long-range goal justifies interest in these bounds, it must be noted that it does not seem like the techniques used in this paper can be used to obtain tight enough bounds to achieve that goal without major new insights.

If $n$ is an odd perfect number, then other types of bounds relating $p_1$ to $n$ also exist. The most basic and classic result in this regard is due to Servais \cite{Servais} who proved the bound for odd perfect numbers, but the same argument applies to any non-deficient number. 

\begin{proposition}(Servais) 
Assume that $n$ is a non-deficient number with  $n=p_1^{a_1} p_2^{a_2} \cdots p_k^{a_k}$ where the $p_i$ are primes with $p_1  < \cdots < p_k$. Then $p_1 \leq k$.
\end{proposition}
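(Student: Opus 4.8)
The plan is to exploit the chain $2 \le h(n) < H(n)$ together with an elementary telescoping upper bound for $H(n)$ expressed purely in terms of $p_1$ and $k$. The guiding idea is that the abundancy index is controlled from above by $H(n)$, while $H(n)$ is largest when the prime divisors are as small and as tightly packed as possible; pushing both observations to their extremes will pin down how small $k$ can be relative to $p_1$.

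First I would record what non-deficiency buys us. Since $n$ is non-deficient, $h(n) = \sigma(n)/n \ge 2$, and since a non-deficient number exceeds $1$, the property recalled in the introduction that $h(m) \le H(m)$ with equality only at $m=1$ gives the \emph{strict} inequality $h(n) < H(n)$. Hence $H(n) > 2$. I would flag at the outset that this strictness is the one genuinely delicate point of the argument: without it the final estimate only yields $p_1 \le k+1$, and the strict inequality is exactly what sharpens the conclusion to $p_1 \le k$.

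Next I would bound $H(n) = \prod_{i=1}^k p_i/(p_i-1)$ from above using only $p_1$ and $k$. Because the $p_i$ are distinct increasing positive integers, a trivial induction gives $p_i \ge p_1 + (i-1)$, and since $x \mapsto x/(x-1) = 1 + 1/(x-1)$ is decreasing for $x>1$, each factor obeys $p_i/(p_i-1) \le (p_1+i-1)/(p_1+i-2)$. Multiplying over $i = 1,\dots,k$ yields a telescoping product:
$$H(n) \le \prod_{i=1}^{k}\frac{p_1+i-1}{p_1+i-2} = \frac{p_1+k-1}{p_1-1}.$$

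Finally I would combine the two halves. From $2 < H(n) \le (p_1+k-1)/(p_1-1)$ and $p_1 - 1 > 0$, cross-multiplying gives $2(p_1-1) < p_1 + k - 1$, that is $p_1 < k+1$. Since $p_1$ and $k$ are integers, this forces $p_1 \le k$, as claimed. The only step requiring care, as noted, is carrying the strict inequality $h(n) < H(n)$ through to the end; every other step is a direct consequence of the monotonicity of $x/(x-1)$ and the telescoping cancellation, neither of which presents any real obstacle.
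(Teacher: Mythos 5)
Your proposal is correct and follows essentially the same route as the paper: both start from $2 \le h(n) < H(n)$, bound each factor via $p_i \ge p_1 + i - 1$ and the monotonicity of $x/(x-1)$, telescope to $\frac{p_1+k-1}{p_1-1}$, and use the strictness of $h(n) < H(n)$ to sharpen $p_1 < k+1$ to $p_1 \le k$. Your explicit flagging of where the strict inequality is needed is a nice touch, but the argument is the paper's argument.
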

\begin{proof}  Assume that $n$ is non-deficient. Thus, $$2 \leq h(n) < H(n) = \prod_{i=1}^k \frac{p_i}{p_i-1}$$

 Since $\frac{x}{x-1}$ is a decreasing function for positive $x$ and the $i$th prime after $p_1$ is at least $p_1+i$, we must have
$$2 <  \left(\frac{p_1}{p_1-1}\right)\left(\frac{p_1+1}{p_1} \right)\left(\frac{p_1+2}{p_1+1}\right) \cdots \left(\frac{p_1+k-1}{p+k-2}\right) = \frac{p_1+k-1}{p_1-1}.  $$
Thus $$2p_1-2 < k -1,$$
and so $k > p_1-1$, and so $k \geq p_1$.
\end{proof}

Gr\"un \cite{Grun} observed that one can tighten this bound by taking into account that every prime other than 2 is odd. This  yields a tighter but still linear bound. 

Norton \cite{Norton} used the prime number theorem to give a tighter bound for $k$ in terms of $p_1$  as well as bounds on the size of $n$ in terms of $p_1$. These were slightly tightened by the author \cite{Zelinskybig}. The bounds of Norton and the author are  asymptotically best possible when restricted to non-deficient numbers, but there is likely much room for improvement if one is restricted to odd perfect numbers. 

Similar bounds relating $k$ to $p_i$ for small $i$ also exist. Kishore \cite{Kishore} proved bounds similar to Gr\"un relating small prime factors to $k$. Kishore's bounds are linear bounds on $k$ in terms of the size of the first few smallest prime factors. The author \cite{Zelinskyfollowup} then adopted Norton's argument for small prime factors in a way that again is essentially asymptotically best possible for non-deficient numbers. Similar bounds have also recently been proven by Aslaksen and Kirfel \cite{AK1}.

In a different direction, bounds relating $n$ and its large prime factors also exist.  Acquaah and Konyagin \cite{AK} showed that if $n$ is an odd perfect number then 
\begin{equation}
\label{p_k AK bound} p_k^3 < 3n.
\end{equation}

The proof for the bound of Acquaah and Konyagin carefully uses that $n$ is perfect, and therefore the proof does not apply to all primitive non-deficient numbers. One might wonder if Acquaah and Konyagin's bound can be extended to apply to all primitive non-deficient numbers. The answer is no.  Consider for example $n=9765$ which is an odd primitive non-deficient number. The largest prime factor of $9765$ is 31, and $(3(9765))^\frac{1}{3} = 30.826 \cdots $.  It is not hard to show that if $n$ is primitive non-deficient number then one must have $p_k^2 < 2n$. This weaker bound is essentially best possible.  However, empirically, the vast majority of odd primitive deficient numbers satisfy Inequality \eqref{p_k AK bound}. Acquaah and Konyagin's bound is notable also for being strong enough to rule out some ``spoof'' perfect numbers, especially that of Descartes's example. 

Descartes noted that $D=198585576189$ looks almost like an odd perfect number. In  particular, one may factor it as $D = 3^2 7^2 11^2 13^2 22021$. One has then that 
$$\sigma(D) = (3^2+3+1)(7^2+7+1)(11^2+11+1)(13^2 +13+1)(22021+1)= 2D$$
where we ignore that 22021 is in fact not a prime number. Examples similar to Descartes's example were extensively studied in \cite{Pace spoof group} which greatly generalized this notion as follows. Given an integer $n$, define a {\emph{factorization}} of $n$ to be an expression of the form  $$n = \prod_{i=1}^{k} x_i^{b_i},$$ where the $x_i$ are integers and the $b_i$ are positive integers. Notice that a factorization can also be thought of as a multiset of ordered pairs of the form $(x_i,b_i)$.  

Define a function $\tilde{\sigma}$ on the multiset of such ordered pairs as follows:
$$
\tilde{\sigma}\Big(\{(x_i,b_i):1\leq i\leq k\}\Big) =\prod_{i=1}^{k}\left(\sum_{j=0}^{b_i}x_i^{j} \right).
$$

A factorization as above is \emph{spoof perfect} if $\tilde{\sigma}(\prod_{i=1}^{k}x_i^{b_i})=2\prod_{i=1}^{k}x_i^{b_i}$.  If $n=p_1^{a_1}p_2^{a_2}\cdots p_k^{a_k}$ with the $p_i$ distinct primes, then $$\tilde{\sigma}((p_1,a_1),(p_2,a_2) \cdots (p_k,a_k))= \sigma(n).$$ Thus $\tilde{\sigma}$ provides a generalization of the classical $\sigma$ function. 

By the above remark, any actual perfect number $n$ gives rise to a spoof perfect factorization. 

In this context, Descartes number $D$ gives rise to a spoof factorization given by the ordered pairs $\{(3,2),(7,2),(11,2), (13,2),(22021,1)\}$. The number $D$ with this factorization is particularly notable for being the only known example where all the $x_i$ are positive and odd. However, many other examples exist where one is allowed to have negative values for $x_i$. 

The existence of Descartes's number, along with similar examples, place heavy restrictions on the effectiveness of different sorts of arguments about odd perfect numbers; any restrictions which apply to spoofs cannot show that no odd perfect numbers exist without also showing that spoofs do not exist. Many results about odd perfect numbers do not avoid this issue. However, Acquaah and Konyagin's bound, which carefully uses that the largest prime factor is actually a prime, manages to be tighter than the best possible bound that applies to spoofs. In particular, 
$(3(198585576189))^\frac{1}{3} = 8414.396 \ldots < 22021$.

Bounds similar to that of Inequality \eqref{p_k AK bound} have been proven for the second and third largest prime factors of an odd perfect number $n$. In particular, the author \cite{ZelinskySecond} proved that 

\begin{equation} p_{k-1} < (2n)^{1/5} \label{Zelinskysecond},  
\end{equation}

and that 

\begin{equation}\label{bc inequality from previous paper}  p_kp_{k-1} < 6^{1/4}n^{1/2}. \end{equation}
Note that the bound in Equation \eqref{Zelinskysecond}, unlike Inequality \eqref{p_k AK bound} is not so strong as to avoid the Descartes example. In particular, $(2(198585576189))^{\frac{1}{5}}  = 208.831 \cdots > 13.$

Bibby, Vyncke, and the author \cite{BVZ} proved a similar bound for $p_{k-3}$ as well as a bound on $p_kp_{k-1}p_{k-2}$. That paper also includes a weak bound of a similar sort on $p_{k-i}$ for any $i$. 
 
A major role in most work on this topic is played by Euler's theorem on odd perfect numbers.

\begin{lemma} (Euler) \label{Euler's theorem} If $n$ is an odd perfect number, then $n=q^e m^2$
 where $q$ is prime, $\gcd(q,m)=1$, and where $q \equiv e \equiv 1 \pmod 4. $   
\end{lemma}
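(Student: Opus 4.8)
The plan is to exploit the multiplicativity of $\sigma$ together with a $2$-adic valuation argument. Writing $n = p_1^{a_1}\cdots p_k^{a_k}$ with all $p_i$ odd, we have $\sigma(n) = \prod_{i=1}^k \sigma(p_i^{a_i})$, where $\sigma(p_i^{a_i}) = 1 + p_i + \cdots + p_i^{a_i}$. Since $n$ is odd, $2n$ is divisible by $2$ exactly once, so from $\sigma(n) = 2n$ the first key observation is that $\sigma(n) \equiv 2 \pmod 4$; equivalently, the $2$-adic valuation of $\sigma(n)$ equals $1$.

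Next I would read off the parity of each factor. Each $\sigma(p_i^{a_i})$ is a sum of $a_i + 1$ odd numbers, hence it is odd precisely when $a_i$ is even and even precisely when $a_i$ is odd. Because the product $\prod_i \sigma(p_i^{a_i})$ has $2$-adic valuation $1$, at most one factor can be even, and that factor must contribute exactly one power of $2$. Thus exactly one exponent is odd; calling the corresponding prime $q$ and exponent $e$, all remaining exponents are even, so $n = q^e m^2$ with $m = \prod_{p_i \neq q} p_i^{a_i/2}$ and $\gcd(q,m)=1$. This already yields the claimed shape and that $e$ is odd.

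It remains to pin down $q$ and $e$ modulo $4$, and for this I would use the refined fact that $\sigma(q^e) \equiv 2 \pmod 4$. The argument splits on $q \bmod 4$. If $q \equiv 3 \pmod 4$, then the residues $q^j \bmod 4$ cycle as $1,3,1,3,\ldots$ for $j = 0,1,2,3,\ldots$; since $e$ is odd, the $e+1$ terms of $\sigma(q^e)$ group into consecutive pairs each congruent to $1 + 3 \equiv 0 \pmod 4$, forcing $\sigma(q^e) \equiv 0 \pmod 4$, which contradicts $\sigma(q^e) \equiv 2 \pmod 4$. Hence $q \equiv 1 \pmod 4$. With $q \equiv 1 \pmod 4$ every power satisfies $q^j \equiv 1 \pmod 4$, so $\sigma(q^e) \equiv e+1 \pmod 4$, and $\sigma(q^e) \equiv 2 \pmod 4$ then forces $e \equiv 1 \pmod 4$.

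The computations here are elementary, so the main point requiring care is the bookkeeping of the $2$-adic valuation in the second step: one must argue simultaneously that exactly one factor is even and that this factor is divisible by $2$ to the first power only. Both follow at once from the fact that $\sigma(n)$ is divisible by $2$ exactly once, once the parity of the individual factors $\sigma(p_i^{a_i})$ is understood, so there is no genuine obstacle beyond carrying out the mod $4$ case analysis accurately.
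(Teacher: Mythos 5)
Your proof is correct and is exactly the standard modular-arithmetic argument the paper alludes to (the paper states this classical lemma without proof, remarking only that it is ``an easy exercise in modular arithmetic'' and that the restriction follows from $\sigma(n)\equiv 2 \pmod 4$ alone --- which is precisely the key observation your argument isolates and exploits). The $2$-adic valuation bookkeeping and the mod $4$ case analysis on $q$ are both carried out accurately, so there is nothing to correct.
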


When we have an odd perfect number $n$ in the above form, we will refer to $q$ as the special prime. Note that Euler's theorem is in modern terms an easy exercise in modular arithmetic, but its ease reflects that it is a very weak statement. The same restriction applies just as well to any odd $n$ where one only assumes that $\sigma(n) \equiv 2 \pmod 4 $.

\section{Proofs of the main bounds}

Before we prove our main results, we need a few technical lemmata. 

\begin{lemma} If $0 \leq x \leq \frac{1}{11}$, then $$1+x+x^2 \geq e^{x+\frac{2x^2}{5}},$$ \label{x lemma to e form with 11} with equality if and only if $x=0$.
\end{lemma}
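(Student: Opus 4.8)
The plan is to take logarithms and reduce the claim to a single-variable monotonicity statement. Since both sides are positive, the inequality $1+x+x^2 \ge e^{x+\frac{2x^2}{5}}$ is equivalent to $f(x) \ge 0$ on $[0,\tfrac{1}{11}]$, where
$$f(x) = \log(1+x+x^2) - x - \frac{2x^2}{5}.$$
One checks immediately that $f(0)=0$, so it suffices to show that $f$ is strictly increasing on $(0,\tfrac{1}{11}]$; this will simultaneously give $f(x)>0$ for $x>0$ and the equality case $x=0$. Thus the entire problem collapses to proving $f'(x) \ge 0$ on the interval.

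Next I would compute
$$f'(x) = \frac{1+2x}{1+x+x^2} - 1 - \frac{4x}{5},$$
and observe that $f'(x)\ge 0$ is equivalent to $\frac{1+2x}{1+x+x^2} \ge \frac{5+4x}{5}$. Because both denominators are positive on $[0,\tfrac1{11}]$, I can clear them without reversing the inequality to obtain $5(1+2x) \ge (5+4x)(1+x+x^2)$. Expanding the right-hand side as $5+9x+9x^2+4x^3$ and cancelling, this becomes $x \ge 9x^2+4x^3$, i.e. (dividing by $x>0$) the clean polynomial condition $9x+4x^2 \le 1$. Since $9x+4x^2$ is increasing on $[0,\infty)$, its maximum over $[0,\tfrac1{11}]$ occurs at the right endpoint and equals $\frac{9}{11}+\frac{4}{121} = \frac{103}{121} < 1$. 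Hence $f'(x) > 0$ for every $x \in (0,\tfrac1{11}]$, with $f'(0)=0$, so $f$ is strictly increasing on the interval and the claim follows.

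The one genuine subtlety, and the thing to get right, is that the naive route of showing $f$ is convex does \emph{not} work: a direct computation of $f''$ shows it is positive near $0$ but changes sign at roughly $x\approx 0.052$, which lies strictly inside $[0,\tfrac1{11}]$, so $f'$ is not monotone and no global convexity argument is available. The key is therefore to bypass second derivatives entirely and instead verify the sign of $f'$ directly by clearing denominators into a polynomial inequality; the value $\tfrac1{11}$ is exactly calibrated so that $9x+4x^2$ stays below $1$, and indeed $f'(\tfrac1{11})$ is positive only by a small margin, which explains why the constant $\tfrac{1}{11}$ (rather than something larger) appears in the statement.
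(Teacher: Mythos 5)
Your proof is correct, and it takes a genuinely different route from the paper. The paper works directly with the exponential: it expands $e^{x+\frac{2x^2}{5}}-(1+x+x^2)$ as a Taylor series, isolates the leading coefficient $-\tfrac{1}{10}x^2$, and then shows by numerical estimation (a bound on the first few terms at $x=\tfrac{1}{11}$ plus a geometric-series bound on the tail) that the remaining positive contributions, divided by $x^2$, stay below $\tfrac{1}{10}$. You instead take logarithms, reduce to showing $f'(x)\ge 0$ for $f(x)=\log(1+x+x^2)-x-\tfrac{2x^2}{5}$, and clear denominators to get the exact polynomial condition $9x+4x^2\le 1$, verified at the endpoint as $\tfrac{103}{121}<1$. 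Your argument is shorter, avoids all floating-point estimates, and in fact establishes the inequality on the slightly larger interval $[0,\tfrac{\sqrt{97}-9}{8}]$ where $9x+4x^2\le 1$ holds; the paper's series approach is closer in spirit to the lemma of Cohen that it tightens and makes the size of the second-order correction visible term by term, at the cost of the tail estimates. One small quibble: your closing remark that $\tfrac{1}{11}$ is ``exactly calibrated'' to keep $9x+4x^2$ below $1$ overstates things, since the true threshold is about $0.106 > \tfrac{1}{11}$; the constant $\tfrac{1}{11}$ comes from the intended application with $x=1/p$ for primes $p\ge 11$, not from the breakdown point of the inequality. This does not affect the validity of the proof.
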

\begin{proof}
 We note that this is true for $x=0$, and so we may assume that $x \neq 0$.  Proving this Lemma is the same as showing that in the desired range  that
$$e^{x+\frac{2x^2}{5}} -(1+x+x^2) \leq 0.$$ 
Expanding this quantity using the Taylor series for $e^x$ and we obtain
\begin{equation}
\begin{split}
    &e^{x+\frac{x^2}{3}} -(1+x+x^2)  =\\ & \frac{-1}{10}x^2 + \frac{2}{5}x^3 + \frac{2}{25}x^4 + \frac{(x+\frac{2x^2}{5})^3}{6} + \frac{(x+\frac{2x^2}{5})^4}{24} + \frac{(x+\frac{2x^2}{5})^5}{120} + \cdots .\label{difference expansion for 11 version}
\end{split}
\end{equation}

Thus, the result can be proved if we prove that if $x \leq \frac{1}{11}$, then

\begin{equation}   \frac{2}{5}x + \frac{2}{25}x^2 + \frac{(x+\frac{2x^2}{5})^3}{6x^2} +  \frac{(x+\frac{2x^2}{5})^4}{24x^2} + \frac{(x+\frac{2x^2}{5})^5}{120x^2} + \cdots < \frac{1}{10}. \label{difference expansion after dividing by x squared and moving 1/10 over} .\end{equation}

We will break up the right-hand side of Equation \eqref{difference expansion after dividing by x squared and moving 1/10 over} and estimate its first three terms, and then estimate the remaining terms.

If $x \leq \frac{1}{11}$, then

 $$\frac{2}{5}x + \frac{2}{25}x^2 + \frac{(x+\frac{2x^2}{5})^3}{6x^2}= \frac{2}{5}x + \frac{2}{25}x^2 + \frac{(1+\frac{2x}{5})^2(x+\frac{2x^2}{5})}{6} < 0.053891.$$

We need to also estimate $$\frac{(x+\frac{2x^2}{5})^4}{24x^2} + \frac{(x+\frac{2x^2}{5})^5}{120x^2} + \cdots.$$

We have:
\begin{equation}
\begin{split}
     \frac{(x+\frac{2x^2}{5})^4}{24x^2} + \frac{(x+\frac{2x^2}{5})^5}{120x^2} + \cdots &\leq \frac{(1+\frac{2x}{5})^2(x+\frac{2x^2}{5})^2}{24} \sum_{i=0}^\infty \left(\frac{(x+\frac{2x^2}{5})}{5}\right)^i  \\
    & \leq \frac{1}{20} \sum_{i=0}^\infty \left(\frac{(\frac{1}{11}+\frac{2(\frac{1}{11})^2}{5})}{5}\right)^i \\
    & \leq 0.0459663.
\end{split}
\end{equation}
Now, note that $0.053891+0.0459663 = 0.0998573 < \frac{1}{10}$ which proves the result. \end{proof}
Note that Lemma \ref{x lemma to e form with 11} is nearly identical to a similar Lemma in Cohen \cite{Cohen 1978} but the conclusion is tighter, though at the cost of being true for a smaller range of values of $x$.

\begin{lemma} If $p \geq 19$ is a prime number, then $\pi(p) \geq \frac{p}{\log p}+1$. \label{RS lower bound for pi}
\end{lemma}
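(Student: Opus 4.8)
The plan is to deduce this bound from an explicit form of the prime number theorem together with a short finite check, which is presumably why the statement is labelled as a Rosser--Schoenfeld bound. The key input I would cite is the classical Rosser--Schoenfeld lower estimate $\pi(x) > \frac{x}{\log x}\left(1 + \frac{1}{2\log x}\right)$, valid for all $x \geq 59$. Rearranging the right-hand side gives $\pi(x) > \frac{x}{\log x} + \frac{x}{2(\log x)^2}$, so to obtain the desired inequality $\pi(p) \geq \frac{p}{\log p} + 1$ it suffices to know that the second term is at least $1$.

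The first main step is therefore to verify that $\frac{x}{2(\log x)^2} \geq 1$, i.e. $x \geq 2(\log x)^2$, for all $x \geq 59$. I would check this at $x = 59$ directly and then note that $x/(\log x)^2$ is increasing for $x \geq e^2$, so the inequality persists for every larger $x$. Combined with the Rosser--Schoenfeld estimate, this yields the strict inequality $\pi(p) > \frac{p}{\log p} + 1$ for every prime $p \geq 59$, which is even slightly stronger than what is claimed.

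The second step is the finite verification for the remaining primes $19 \leq p \leq 53$, of which there are only nine, namely $19, 23, 29, 31, 37, 41, 43, 47, 53$. For each I would simply compute $\pi(p)$ and compare it with $\frac{p}{\log p} + 1$; in every case the integer $\pi(p)$ exceeds the right-hand side with room to spare (for instance $\pi(19) = 8$ while $\frac{19}{\log 19} + 1 \approx 7.45$). It is worth recording that the hypothesis $p \geq 19$ cannot be relaxed, since at $p = 17$ one has $\pi(17) = 7$ whereas $\frac{17}{\log 17} + 1 > 7$, so the inequality just fails there.

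I do not expect any genuine obstacle: the argument is a standard pairing of an explicit analytic bound with a boundary check. The only points needing care are citing the Rosser--Schoenfeld inequality with the correct constant and range of validity, and ensuring that the threshold of that inequality, here $59$, is small enough that both the residual finite check stays short and the monotonicity argument for $x \geq 2(\log x)^2$ covers the entire range above it.
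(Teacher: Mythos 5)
Your proposal is correct and follows essentially the same route as the paper: invoke the Rosser--Schoenfeld bound $\pi(x) \geq \frac{x}{\log x}\left(1+\frac{1}{2\log x}\right)$ for $x \geq 59$ and finish with a finite check of the primes from $19$ to $53$. You merely make explicit the intermediate step $x \geq 2(\log x)^2$ that the paper leaves implicit, which is a welcome addition rather than a deviation.
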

\begin{proof} We have  from Rosser and Schoenfeld's classic bounds on the prime number theorem \cite{Rosser Schoenfeld}, that if $x$ is a real number and $x \geq 59$ then $$\pi(x) \geq \frac{x}{\log x}\left(1+\frac{1}{2\log x}\right).$$ This implies that when $p$ is prime and $p \geq 59$, then $\pi(p) \geq \frac{p}{\log p}+1$. We need to then only check that the inequality is true for the primes which are between 19 and 53. 
\end{proof}

We also need the following bound from Rosser and Schoenfeld \cite{Rosser Schoenfeld}. 
\begin{lemma}  Let $P_j$ be the $j$th prime number. If $j \geq 1$, then $P_j \geq j \log j$, where $P_j$ is the $n$th prime number. \label{Rosser-Schoenfeld lower bond for prime}
\end{lemma}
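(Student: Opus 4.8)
The statement is precisely Rosser's classical theorem that the $j$th prime exceeds $j\log j$, so the most direct route is simply to invoke the explicit prime-counting estimates of Rosser and Schoenfeld \cite{Rosser Schoenfeld}, where this inequality is recorded. It is worth flagging at the outset that Lemma \ref{RS lower bound for pi} is \emph{not} usable here: that lemma gives a lower bound on $\pi$, whereas to bound $P_j$ from below one needs an \emph{upper} bound on $\pi(x)$. This is because $\pi(P_j)=j$ forces $P_j$ to be large exactly when $\pi$ grows slowly, so the lower bound must come from controlling how few primes lie below $x$.

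If one wants a self-contained argument rather than a bare citation, the plan is to convert an upper bound on $\pi$ into the desired lower bound on $P_j$. I would start from the Rosser--Schoenfeld inequality $\pi(x) < \frac{x}{\log x - 3/2}$, valid for $x \geq 67$. The key observation is that it suffices to prove $\pi(x)\log \pi(x) \leq x$ for all sufficiently large $x$: evaluating this at $x = P_j$ and using $\pi(P_j)=j$ gives $j\log j \leq P_j$ at once. Substituting the upper bound for $\pi(x)$, one has $\log\pi(x) < \log x - \log(\log x - 3/2)$, and feeding both estimates into the product $\pi(x)\log\pi(x)$ reduces the target inequality to $\log(\log x - 3/2)\geq 3/2$, i.e.\ to $x \geq \exp(e^{3/2}+3/2)$, a fixed numerical threshold near $400$.

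The only remaining work is then a finite check: for the finitely many $j$ with $P_j$ below that threshold, equivalently $j \leq \pi(400)=78$, one verifies $P_j \geq j\log j$ directly from a table of primes. This is routine and the bound is comfortably satisfied in every small case (the ratio $P_j/(j\log j)$ stays above $1$), so no delicate estimate is needed there. The main obstacle, such as it is, lies in pinning down the exact constant and validity range in the $\pi(x)$ upper bound so that the reduction $\log(\log x - 3/2)\geq 3/2$ takes effect below the threshold one still checks by hand, and in making sure all small values of $j$ are genuinely covered. Since the inequality is entirely classical, in the paper I would simply cite \cite{Rosser Schoenfeld} and note that this is Rosser's theorem, reserving the explicit $\pi(x)$-to-$P_j$ conversion above only as an optional remark.
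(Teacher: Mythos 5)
Your proposal matches the paper exactly: the paper gives no proof of this lemma and simply records it as a known estimate of Rosser and Schoenfeld \cite{Rosser Schoenfeld}, which is precisely your primary recommendation. Your optional self-contained reduction (converting the upper bound $\pi(x) < x/(\log x - 3/2)$ into $\pi(x)\log\pi(x)\leq x$ for $x$ beyond a threshold near $400$, then checking $j\leq 78$ by hand) is also sound, and your observation that Lemma \ref{RS lower bound for pi} points in the wrong direction for this purpose is correct.
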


\begin{lemma} Let $q$ be a prime with $q \geq 3$ . Let $M(q)=\sum_{p \geq q} \frac{1}{p^2}$ where $p$ ranges over all primes at least $q$. Then $M(q) < \frac{1}{q\log q}\left( 1 + \frac{2 \log \log q}{\log q}\right).$ \label{1/p^2 sum lemma}
\end{lemma}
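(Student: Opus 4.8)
The plan is to convert the prime sum into an integral against the prime counting function by partial summation, apply Rosser--Schoenfeld bounds, and then carefully track the secondary term. Writing $A(t)$ for the number of primes in $[q,t]$, so that $A(t)=\pi(t)-\pi(q^-)$ with $\pi(q^-)=\pi(q)-1$, Abel summation applied to $f(t)=t^{-2}$ gives the exact identity
$$M(q)=2\int_q^\infty \frac{\pi(t)-\pi(q^-)}{t^3}\,dt = 2\int_q^\infty\frac{\pi(t)}{t^3}\,dt-\frac{\pi(q^-)}{q^2},$$
since the boundary terms vanish and $\int_q^\infty t^{-3}\,dt=\tfrac{1}{2q^2}$. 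The crucial observation is that the subtracted term is genuinely needed: bounding $A(t)\le\pi(t)$ alone would produce a leading constant of $2$ rather than the correct $1$, so I must retain $\pi(q^-)$ and bound it from below.

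For the first integral I would insert the Rosser--Schoenfeld upper bound $\pi(t)<\frac{t}{\log t}\bigl(1+\frac{3}{2\log t}\bigr)$ from \cite{Rosser Schoenfeld}, valid on the range $t\ge q$, to obtain
$$2\int_q^\infty\frac{\pi(t)}{t^3}\,dt< 2\int_q^\infty\frac{dt}{t^2\log t}+3\int_q^\infty\frac{dt}{t^2(\log t)^2}=2I_1+3I_2.$$
The identity that makes the constants collapse correctly is the integration by parts
$$I_1=\int_q^\infty\frac{dt}{t^2\log t}=\frac{1}{q\log q}-\int_q^\infty\frac{dt}{t^2(\log t)^2}=\frac{1}{q\log q}-I_2,$$
so that $2I_1+3I_2=\frac{2}{q\log q}+I_2$. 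For the subtracted term I would use Lemma \ref{RS lower bound for pi}, which gives $\pi(q^-)=\pi(q)-1\ge\frac{q}{\log q}$ for $q\ge 19$, whence $\frac{\pi(q^-)}{q^2}\ge\frac{1}{q\log q}$. Combining these,
$$M(q)<\frac{2}{q\log q}+I_2-\frac{1}{q\log q}=\frac{1}{q\log q}+I_2.$$

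Finally I would estimate the leftover integral crudely by $I_2\le\frac{1}{(\log q)^2}\int_q^\infty t^{-2}\,dt=\frac{1}{q(\log q)^2}$, and then match it to the target by noting that $2\log\log q\ge 1$ as soon as $q\ge 7$; this yields $M(q)<\frac{1}{q\log q}+\frac{2\log\log q}{q(\log q)^2}$ for all $q\ge 19$. The finitely many remaining primes $q\in\{3,5,7,11,13,17\}$ lie outside the range where the $\pi$-bounds are applied, so I would dispose of them by direct computation, evaluating the first several terms of $M(q)$ and bounding the tail (for instance by comparison with $\sum_{n\ge N}n^{-2}$); in each case the inequality holds with substantial room, and if the chosen $\pi$-upper bound forces a higher threshold one simply enlarges this finite check. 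The main obstacle is not any single estimate but the bookkeeping of the secondary term: the naive argument loses a factor of $2$, and recovering the sharp leading constant $\frac{1}{q\log q}$ forces both the lower bound on $\pi(q^-)$ and the integration-by-parts cancellation, after which the genuinely lower-order piece $I_2$ must be shown to fit under $\frac{2\log\log q}{q(\log q)^2}$---which is exactly what fails for the smallest primes and explains the separate finite verification.
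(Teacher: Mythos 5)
Your argument is correct, and it reaches the stated bound by a genuinely different route from the paper. The paper's proof works entirely with lower bounds: it uses Lemma~\ref{Rosser-Schoenfeld lower bond for prime} to replace the $j$th prime by $j\log j$, so that $M(q)\leq\sum_{n\geq\pi(q)}(n\log n)^{-2}$, compares that sum to $\int_{q/\log q}^{\infty}(x\log x)^{-2}\,dx$ via Lemma~\ref{RS lower bound for pi}, evaluates the integral in the closed form $\bigl(\tfrac{q}{\log q}\log\tfrac{q}{\log q}\bigr)^{-1}$, and then checks separately that this quantity sits below the target $\tfrac{1}{q\log q}\bigl(1+\tfrac{2\log\log q}{\log q}\bigr)$ for $q\geq19$; the secondary term $\tfrac{2\log\log q}{\log q}$ thus emerges from expanding $\log\tfrac{q}{\log q}=\log q-\log\log q$ in the denominator. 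You instead run partial summation against $\pi(t)$ itself, insert the Rosser--Schoenfeld \emph{upper} bound $\pi(t)<\tfrac{t}{\log t}(1+\tfrac{3}{2\log t})$ from \cite{Rosser Schoenfeld}, and recover the correct leading constant through the cancellation $2I_1+3I_2=\tfrac{2}{q\log q}+I_2$ combined with the lower bound $\pi(q)-1\geq\tfrac{q}{\log q}$ from Lemma~\ref{RS lower bound for pi}; your identification of where the naive factor of $2$ is lost and recovered is exactly right, and all the individual steps (the Abel summation identity, the integration by parts for $I_1$, the crude bound $I_2\leq\tfrac{1}{q(\log q)^2}$, and the reduction to $2\log\log q\geq1$) check out. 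Your version has two modest advantages: it actually yields the slightly stronger intermediate estimate $M(q)<\tfrac{1}{q\log q}+\tfrac{1}{q(\log q)^2}$ for $q\geq19$, which is closer in spirit to the remark following Lemma~\ref{1/p^2 sum lemma} that one expects $M(q)\leq\tfrac{1}{q\log q}$ eventually, and it correctly includes $q=17$ in the finite verification (the paper's list of small cases stops at $13$ even though its asymptotic argument only covers $q\geq19$). The trade-off is that you need both an upper and a lower bound on $\pi$, where the paper gets by with lower bounds on primes alone.
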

\begin{proof} Let $q$ be a prime which is at least 19. Assume that $M(q)=\sum_{p \geq q} \frac{1}{p^2}$. We have, using Lemma \ref{RS lower bound for pi} and Lemma \ref{Rosser-Schoenfeld lower bond for prime} that \begin{equation}\begin{split} M(q) \leq \sum_{n \geq \pi(q)}\frac{1}{(n \log n)^2} & \leq \int_{\frac{q}{\log q}}^{\infty} \frac{1}{(x \log x)^2} dx\\ &<  \int_{\frac{q}{\log q}}^{\infty} \frac{2 +\log x }{(x \log x)^3} dx = \frac{1}{\frac{q}{\log q} \log \frac{q}{\log q}}.\end{split}  \end{equation}

\noindent It is now straightforward to verify that if $q \geq 19$ then $$\frac{1}{\frac{q}{\log q} \log \frac{q}{\log q}} <   \frac{1}{q\log q}\left( 1 + \frac{2 \log \log q}{\log q}\right), $$ and so the result is proven for $q \geq 19$. Using the inequality for $q \geq 19$, one can verify directly   that the same inequality holds for $q=3$, $5$, $7,$ $11$, and $13$.
\end{proof}

Note that Lemma \ref{1/p^2 sum lemma} is asymptotically best possible. However, we suspect that if $q$ is sufficiently large then one in fact has $M(q) \leq \frac{1}{q\log q}$.

\begin{lemma} Let $q$ be an odd prime. Set $E=\prod_{p \geq q}\frac{p^2}{p^2-1}$. Then 
$$\log E \leq \frac{1}{q\log q}\left( 1 + \frac{2 \log \log q}{\log q}\right) + \frac{4}{q^3}.$$
\end{lemma}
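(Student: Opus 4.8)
The plan is to expand $\log E$ using the Taylor series of $-\log(1-x)$ and to peel off the dominant $\sum 1/p^2$ term, which is precisely the quantity $M(q)$ already estimated in Lemma~\ref{1/p^2 sum lemma}. Concretely, I would write
$$\log E = \sum_{p \geq q} -\log\left(1 - \frac{1}{p^2}\right) = \sum_{p \geq q}\sum_{n=1}^{\infty}\frac{1}{n\,p^{2n}} = M(q) + \sum_{p\geq q}\sum_{n=2}^{\infty} \frac{1}{n\,p^{2n}},$$
where $M(q) = \sum_{p \geq q} 1/p^2$. Lemma~\ref{1/p^2 sum lemma} gives $M(q) < \frac{1}{q\log q}\left(1 + \frac{2\log\log q}{\log q}\right)$, which supplies exactly the first term of the claimed bound. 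It therefore remains only to show that the tail satisfies $\sum_{p\geq q}\sum_{n=2}^{\infty} \frac{1}{n\,p^{2n}} \leq \frac{4}{q^3}$.

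For the tail I would bound $1/n \leq 1/2$ for $n \geq 2$ and sum the resulting geometric series, obtaining $\sum_{n=2}^{\infty} \frac{1}{n\,p^{2n}} \leq \frac{1}{2}\sum_{n=2}^{\infty} p^{-2n} = \frac{1}{2p^2(p^2-1)}$. Since $q$ is an odd prime, every $p \geq q$ satisfies $p \geq 3$, so $p^2 - 1 \geq \frac{8}{9}p^2$ and hence $\frac{1}{2p^2(p^2-1)} \leq \frac{9}{16\,p^4}$. Comparing with an integral then yields $\sum_{p \geq q} p^{-4} \leq \sum_{n \geq q} n^{-4} < q^{-4} + \int_q^\infty x^{-4}\,dx = q^{-4} + \frac{1}{3q^3}$, and for $q \geq 3$ the term $q^{-4}$ is at most $\frac{1}{3q^3}$, so this is below $\frac{2}{3q^3}$. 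Assembling the pieces bounds the tail by $\frac{9}{16}\cdot\frac{2}{3q^3} = \frac{3}{8q^3}$, which is comfortably smaller than $\frac{4}{q^3}$.

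Because the tail estimate has so much slack, there is no real obstacle here: the only genuine input is Lemma~\ref{1/p^2 sum lemma}, and everything else is the routine geometric-series and integral comparison sketched above. If there is any subtlety, it is merely in recognizing that the target constant $\frac{4}{q^3}$ is far from sharp, so one need not track constants carefully; the margin between $\frac{3}{8q^3}$ and $\frac{4}{q^3}$ absorbs all the crude steps. The deliberate looseness presumably reflects that this lemma is an intermediate tool whose error term only needs to be $O(1/q^3)$ for the later applications to $T(n)$ and $H(n)$.
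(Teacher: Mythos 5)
Your proof is correct and follows essentially the same route as the paper: peel off the main term $M(q)=\sum_{p\geq q}1/p^2$, invoke Lemma~\ref{1/p^2 sum lemma}, and bound the leftover by $4/q^3$ (the paper uses $\log(1+x)\leq x$ and the identity $\frac{1}{p^2-1}=\frac{1}{p^2}+\frac{1}{p^2(p^2-1)}$ rather than the full Taylor series, but this is a cosmetic difference). Your tail estimate is in fact more detailed than the paper's, which simply asserts $\sum_{p\geq q}\frac{1}{p^2(p^2-1)}\leq\frac{4}{q^3}$.
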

\begin{proof}
    We have $$\log E = \sum_{p \geq q} \log \left(1+ \frac{1}{p^2-1}\right)\leq \sum_{p \geq q} \frac{1}{p^2-1} = \sum_{p \geq q}\left( \frac{1}{p^2} + \frac{1}{p^2(p^2-1)}\right).  $$

    We now note that $\sum_{p \geq q} \frac{1}{p^2(p^2-1)}  \leq \frac{4}{q^3}$, and combine that with Lemma \ref{1/p^2 sum lemma}.
\end{proof}

\begin{lemma} If $x$ is an integer and  $x \geq 2$, then $\log(1+\frac{1}{x-1}) \leq \frac{1}{x}+ \frac{25}{32x^2}$. \label{25/32 x squared lemma}
\end{lemma}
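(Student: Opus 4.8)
The plan is to rewrite the left-hand side using the standard logarithm series and then isolate the quadratic contribution. Observe first that $1 + \frac{1}{x-1} = \frac{x}{x-1} = \frac{1}{1 - 1/x}$, so that $\log\left(1 + \frac{1}{x-1}\right) = -\log\left(1 - \frac{1}{x}\right)$. Since $x \geq 2$ gives $0 < \frac{1}{x} \leq \frac{1}{2} < 1$, the Taylor expansion $-\log(1-u) = \sum_{n \geq 1} \frac{u^n}{n}$ applies with $u = 1/x$, yielding
$$\log\left(1 + \frac{1}{x-1}\right) = \frac{1}{x} + \frac{1}{2x^2} + \sum_{n \geq 3} \frac{1}{n x^n}.$$
Subtracting $\frac{1}{x}$ and multiplying through by $x^2$, the claimed inequality is therefore equivalent to $\frac{1}{2} + \sum_{n \geq 3} \frac{1}{n x^{n-2}} \leq \frac{25}{32}$, i.e.\ to
$$\sum_{n \geq 3} \frac{1}{n x^{n-2}} \leq \frac{9}{32}.$$

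The second step is to bound this remaining tail for $x \geq 3$. I would peel off the $n=3$ term and bound the rest by a geometric series: since $n \geq 4$ gives $\frac{1}{n} \leq \frac{1}{4}$, we have $\sum_{n \geq 4} \frac{1}{n x^{n-2}} \leq \frac{1}{4}\sum_{n \geq 4} \frac{1}{x^{n-2}} = \frac{1}{4x(x-1)}$, so the tail is at most $\frac{1}{3x} + \frac{1}{4x(x-1)}$. This expression is a sum of two functions that are each decreasing in $x$ on $x \geq 1$, hence decreasing, so on the integers $x \geq 3$ it attains its maximum at $x = 3$, where it equals $\frac{1}{9} + \frac{1}{24} = \frac{11}{72} < \frac{9}{32}$. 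This settles all integers $x \geq 3$.

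The main obstacle is the case $x = 2$, which the geometric tail bound above does not resolve: the constant $\frac{25}{32}$ is essentially calibrated so that the inequality is barely true at $x = 2$. Indeed, at $x=2$ the claim reads $\log 2 \leq \frac{1}{2} + \frac{25}{128}$, and numerically $\log 2 = 0.6931\ldots$ while $\frac{1}{2} + \frac{25}{128} = 0.6953\ldots$, a margin of only about $0.002$. I would therefore dispose of $x = 2$ by this direct numerical verification and combine it with the $x \geq 3$ argument to conclude. The only subtlety to record carefully is the convergence and validity of the series step, which is guaranteed by $1/x \le 1/2$ for all $x \ge 2$; everything thereafter is elementary estimation.
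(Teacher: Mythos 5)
Your proof is correct, but it follows a genuinely different route from the paper's. The paper substitutes $t=\frac{1}{x-1}$ into the truncated bound $\log(1+t)\leq t-\frac{t^2}{2}+\frac{t^3}{3}$, rewrites $\frac{1}{x-1}=\frac{1}{x}+\frac{1}{x(x-1)}$, and then reduces the claim to a rational-function inequality, namely $\frac{3x^2-7x+6}{6x(x-1)^3}\leq\frac{25}{32x^2}$, which it settles for $x\geq 5$ via $x-1\geq\frac{4}{5}x$ after checking $x=2,3,4$ by hand. You instead exploit the exact identity $1+\frac{1}{x-1}=(1-\frac{1}{x})^{-1}$ to get a power series in $\frac{1}{x}$ with all positive terms, which makes the coefficient $\frac{1}{2}$ of $\frac{1}{x^2}$ appear exactly and reduces the lemma to the clean tail estimate $\sum_{n\geq 3}\frac{1}{nx^{n-2}}\leq\frac{9}{32}$; a geometric bound then handles all $x\geq 3$ at once, leaving only $x=2$ for direct verification. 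Your version is arguably more transparent: it avoids the change of variable back and forth between $\frac{1}{x-1}$ and $\frac{1}{x}$ and the attendant rational-function algebra, needs only one hand-checked case instead of three, and makes visible exactly where the constant $\frac{25}{32}$ is tight (at $x=2$, as you correctly note, where the margin is about $0.002$ and your geometric tail bound of $\frac{1}{6}+\frac{1}{8}=\frac{7}{24}>\frac{9}{32}$ is just too crude, so the separate numerical check there is genuinely necessary and not an oversight). The paper's approach has the minor advantage of working entirely with finitely many explicit terms rather than an infinite series, but both arguments are elementary and complete.
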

\begin{proof}
One may verify directly that this is satisfied for $x=2$, $x=3$ and $x=4$. So  we may assume that $x \geq 5$. We have from the Taylor series for $\log (1+t)$, that if $0 \leq t <1$ that

\begin{equation} \log (1+t) \leq t - \frac{t^2}{2} + \frac{t^3}{3} \label{truncated log}. 
\end{equation}

We can substitute $t=\frac{1}{x-1}$ into Inequality \eqref{truncated log} to obtain:

\begin{equation}\log(1+\frac{1}{x-1}) \leq \frac{1}{x-1}  - \frac{1}{2(x-1)^2} + \frac{1}{3(x-1)^3} \label{truncated log 2}. \end{equation}

We have then

\begin{equation}\frac{1}{x-1}  - \frac{1}{2(x-1)^2} + \frac{1}{3(x-1)^3} = \frac{1}{x} + \frac{1}{x(x-1)} - \frac{1}{2(x-1)^2} + \frac{1}{3(x-1)^3} .\label{truncated log 3} \end{equation}

Thus, the result holds if we show that 

$$ \frac{1}{x(x-1)} - \frac{1}{2(x-1)^2} + \frac{1}{3(x-1)^3} \leq \frac{25}{32x^2}.$$

We have that $$ \frac{1}{x(x-1)} - \frac{1}{2(x-1)^2} + \frac{1}{3(x-1)^3} = \frac{3x^2 -7x+6}{6x(x-1)^3 } $$
Since $x \geq \frac{3}{2} $, we have that

$$\frac{3x^2 -7x+6}{6x(x-1)^3 } \leq \frac{3x^2 -3x}{6x(x-1)^3 } = \frac{1}{2(x-1)^2}.$$
Since $x \geq 5$, $x-1 \geq \frac{4}{5}x$, and so 
$$\frac{1}{2(x-1)^2} \leq \frac{25}{32x^2}.$$
\end{proof}

We now prove Proposition \ref{first lower upper bound for T}.
\begin{proof} Assume that $n$ is a non-deficient number with factorization as above. Then
\begin{equation}2 +S=H(n) = \prod_{i=1}^k \frac{p_i}{p_i-1} . \label{2+ S} \end{equation}

We take the logarithm of both sides of Equation \eqref{2+ S} to obtain that 
\begin{equation} \log (2+S) = \sum_{i=1}^k \log \frac{p_i}{p_i-1} = \sum_{i=1}^k \log \left(1+\frac{1}{p_i-1}\right). \end{equation}
We then apply Lemma \ref{25/32 x squared lemma} to obtain that 

\begin{equation}\begin{split} \log (2+S) &<  \sum_{i=1}^k\left( \frac{1}{p_i} + \frac{25}{32p_i^2}\right) = \sum_{i=1}^k \frac{1}{p_i} + \frac{25}{32}\sum_{i=1}^k \frac{1}{p_i^2}\\ &   <  \sum_{i=1}^k \frac{1}{p_i} + \frac{25}{64p_1}.\end{split}
\end{equation}

In the last step of the above inequality, we are using that the primes are odd to get an upper bound on $\sum_{i=1}^\infty \frac{1}{p_i^2} $.

Since $S<1$ we have $\log (2+S) \geq \log 2 + \frac{S}{2} - \frac{S^2}{4}$, which implies the desired result.
\end{proof}

Proposition \ref{first lower upper bound for T} helps illustrate more explicitly why prior results cannot progress beyond $\log 2$ as their general upper bound.

Similarly, we may use Lemma \ref{1/p^2 sum lemma} to obtain:

\begin{proposition} Assume $n$ is a primitive non-deficient number with prime factorization given by $n=p_1^{a_1}p_2^{a_2} \cdots p_k^{a_k}$ with primes $p_1$, $p_2$, $\cdots p_k$ and $p_1 <  \ldots < p_k$, and surplus $S=S(n)$.  Then $$T(n) \geq \log 2 - \frac{25}{32p_1\log p_1}\left( 1 + \frac{2 \log \log p_1}{\log p_1}\right) + \frac{S}{2} -\frac{S^2}{4}.$$ \label{second lower upper bound for T}\end{proposition}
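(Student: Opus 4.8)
The plan is to follow the proof of Proposition \ref{first lower upper bound for T} essentially verbatim, changing only the way the tail sum $\sum_{i=1}^{k}\frac{1}{p_i^2}$ is estimated: instead of the elementary ``the primes are odd'' bound, I would substitute the sharper estimate provided by Lemma \ref{1/p^2 sum lemma}.

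First I would start from the identity $2+S = H(n) = \prod_{i=1}^{k}\frac{p_i}{p_i-1}$ and take logarithms of both sides, obtaining $\log(2+S) = \sum_{i=1}^{k}\log\!\left(1+\frac{1}{p_i-1}\right)$. Applying Lemma \ref{25/32 x squared lemma} to each term then yields $\log(2+S) < \sum_{i=1}^{k}\frac{1}{p_i} + \frac{25}{32}\sum_{i=1}^{k}\frac{1}{p_i^2} = T(n) + \frac{25}{32}\sum_{i=1}^{k}\frac{1}{p_i^2}$, so that $T(n) > \log(2+S) - \frac{25}{32}\sum_{i=1}^{k}\frac{1}{p_i^2}$.

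Next I would control the tail sum using monotonicity together with Lemma \ref{1/p^2 sum lemma}. Since the $p_i$ are distinct primes all at least $p_1$, the set $\{p_1,\dots,p_k\}$ is contained in the set of all primes $p \geq p_1$, whence $\sum_{i=1}^{k}\frac{1}{p_i^2} \leq M(p_1)$. Applying Lemma \ref{1/p^2 sum lemma} with $q = p_1$ (valid since $p_1 \geq 3$, which holds in the intended odd setting) gives $\frac{25}{32}\sum_{i=1}^{k}\frac{1}{p_i^2} < \frac{25}{32 p_1 \log p_1}\left(1 + \frac{2\log\log p_1}{\log p_1}\right)$. Finally, exactly as in Proposition \ref{first lower upper bound for T}, I would use $S < 1$ to deduce $\log(2+S) \geq \log 2 + \frac{S}{2} - \frac{S^2}{4}$ and combine the two estimates, which produces the claimed inequality immediately.

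Because this is a near-verbatim adaptation of the earlier argument, I do not expect any genuine obstacle. The only points needing a word of care are verifying the monotonicity step $\sum_{i=1}^{k}\frac{1}{p_i^2}\leq M(p_1)$ and confirming the hypothesis $q=p_1\geq 3$ under which Lemma \ref{1/p^2 sum lemma} was established, both of which are routine.
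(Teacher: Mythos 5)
Your proposal is correct and is exactly the argument the paper intends: it rewrites the proof of Proposition \ref{first lower upper bound for T}, replacing the crude odd-primes estimate for $\sum_i 1/p_i^2$ with the bound $M(p_1)$ from Lemma \ref{1/p^2 sum lemma} (the paper itself only says ``Similarly, we may use Lemma \ref{1/p^2 sum lemma} to obtain'' and omits the details). The two points you flag --- the containment $\sum_{i=1}^k 1/p_i^2 \leq M(p_1)$ and the hypothesis $p_1 \geq 3$ in the implicit odd setting --- are indeed the only things to check, and both hold.
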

We now prove Proposition \ref{upper bound for T, between all prim defs and opns}.

\begin{proof} The proof given here is essentially a tightening of the proof given in Cohen \cite{Cohen 1978} to show that $T(n) < \log 2$. There are three changes.  First, we use Lemma \ref{x lemma to e form with 11} rather than Cohen's Lemma. Second, we keep more careful track of the second order terms.  Third, we extend the result to primitive non-deficient numbers of a specific form which includes odd perfect numbers, rather than just prove the result for odd perfect numbers. 

Assume as given. Then we have:
\begin{equation}
    2=\frac{\sigma(n)}{n} = \prod_{1 \leq i \leq k} \left(1+\frac{1}{p_i} + \cdots +\frac{1}{p_i^{a_i}} \right)\geq  \left(1+\frac{1}{p_t}\right)\prod_{\substack{1 \leq i \leq k\\ i \neq t}} \left(1+\frac{1}{p_i}+\frac{1}{p_i^2}\right). \label{Starting inequality for T upper}
\end{equation}

Since every $p_i$ is assumed to be at least 11, we may apply Lemma \ref{x lemma to e form with 11} to Equation \eqref{Starting inequality for T upper} to obtain that

\begin{equation} 2 \geq (1+\frac{1}{p_t})\prod_{\substack{1 \leq i \leq k\\ i \neq t}} e^{\frac{1}{p_i} + \frac{2}{5p_i^2}},
\end{equation}

and hence

\begin{equation} \log 2 \geq \log\left(1+\frac{1}{p_t}\right) + \sum_{\substack{1 \leq i \leq k\\ i \neq t}} \frac{1}{p_i} + \frac{2}{5p_i^2}. \label{log 2 upper next} \end{equation}

Since $\log (1+x) > x -\frac{x^2}{2}$, we obtain from Equation \eqref{log 2 upper next} that

\begin{equation} \log 2 > \sum_{i=1}^k \frac{1}{p_i} + \frac{2}{5}\sum_{\substack{1 \leq i \leq k\\ i \neq t}}  \frac{1}{p_i^2} - \frac{1}{2p_t^2}. \label{penultimate log}
\end{equation}

As we have $2p_1-1 \leq p_t$, and $p_1 \geq 3$, we have that $\frac{5}{3}p_1 \leq p_t$. Thus,
$\frac{9}{50}\frac{1}{p_1^2} \geq \frac{1}{2p_t^2}$, which combines with Equation \eqref{penultimate log} to obtain the desired bound. 
\end{proof}

We obtain as a corollary:
\begin{corollary}
    
 If $n$ is an odd perfect number with smallest prime factor $p_1 \geq 11$, then $T(n) < \log 2 - \frac{11}{50p_1^2}.$ \label{OPN upper bound for T(n)}   
\end{corollary}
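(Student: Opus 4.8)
The plan is to deduce the corollary from Proposition \ref{upper bound for T, between all prim defs and opns} by checking that every odd perfect number $n$ with $p_1 \geq 11$ meets its hypotheses. First, recall that every perfect number is primitive non-deficient, so that requirement is immediate. Next, by Euler's theorem (Lemma \ref{Euler's theorem}) we may write $n = q^e m^2$ with $q$ prime, $\gcd(q,m)=1$, and $q \equiv e \equiv 1 \pmod 4$; in particular $e$ is odd and every prime other than $q$ occurs to an even (and hence $\geq 2$) power in $n$. Thus exactly one exponent in the factorization of $n$ is odd, namely the one attached to the special prime, and we set $p_t = q$. Since $p_1 \geq 11$ and $p_1$ is the least prime factor, every $p_i \geq 11$, so the only remaining hypothesis to verify is $\frac{p_t + 2}{2} \geq p_1$, i.e.\ $q \geq 2p_1 - 2$.

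The heart of the argument is therefore a sub-lemma asserting that the special prime satisfies $q \geq 2p_1 - 1$. First I would observe that, because $e$ is odd, one has the factorization $\sigma(q^e) = (1+q)(1 + q^2 + q^4 + \cdots + q^{e-1})$, so $q + 1 \mid \sigma(q^e)$. Since $\sigma$ is multiplicative and $q^e \parallel n$, we have $\sigma(q^e) \mid \sigma(n) = 2 q^e m^2$; and as $\sigma(q^e) \equiv 1 \pmod q$ we have $\gcd(\sigma(q^e), q^e) = 1$, whence $\sigma(q^e) \mid 2m^2$ and in particular $q+1 \mid 2m^2$. Because $q + 1$ is even, this gives $\frac{q+1}{2} \mid m^2$, so every prime factor of $\frac{q+1}{2}$ divides $m$, hence divides $n$, and is therefore at least $p_1$.

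Now I would finish the sub-lemma by a size argument: the integer $\frac{q+1}{2}$ is positive with all of its prime factors $\geq p_1$, so either $\frac{q+1}{2} = 1$ or $\frac{q+1}{2} \geq p_1$. The first case forces $q = 1$, which is impossible, so $\frac{q+1}{2} \geq p_1$, giving $q \geq 2p_1 - 1$ and hence $\frac{q+2}{2} \geq p_1 + \frac{1}{2} > p_1$. With this inequality in hand all hypotheses of Proposition \ref{upper bound for T, between all prim defs and opns} are satisfied, and its conclusion yields $T(n) < \log 2 - \frac{11}{50 p_1^2}$, as desired. I expect the main obstacle to be this sub-lemma bounding $q$ from below; everything else is bookkeeping once Euler's structure theorem is invoked, but one must be careful that the parity of $e$ is exactly what makes $q+1$ divide $\sigma(q^e)$.
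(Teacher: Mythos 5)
Your proposal is correct and follows essentially the same route as the paper: both verify the hypotheses of Proposition \ref{upper bound for T, between all prim defs and opns} via Euler's theorem and the observation that $p_t+1 \mid \sigma(p_t^{a_t}) \mid 2n$, so every (odd) prime factor of $\frac{p_t+1}{2}$ divides $n$ and is therefore at least $p_1$, forcing $\frac{p_t+1}{2}\geq p_1$. Your write-up is somewhat more detailed than the paper's (which leaves the divisibility chain and the ``$\frac{q+1}{2}=1$ is impossible'' step implicit), but the underlying argument is identical.
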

\begin{proof} To prove this we just need to show that any odd perfect number satisfies the properties of Proposition \ref{upper bound for T, between all prim defs and opns}.  But this is essentially just Euler's theorem for odd perfect numbers, along with the fact that if $p_t$ is the special prime of $n$, then $$p_t+1\mid\sigma(p_t^{a_t}),$$ and hence $p_t+1\mid2n$. Thus, $n$ must be divisible any odd prime factor of  $p_t+1$, and hence $p_1 \leq \frac{p_t+1}{2}$.
\end{proof}

The main usage of Proposition \ref{upper bound for T, between all prim defs and opns} was in the case that our $n$ was an odd perfect number, and those likely do not exist. Thus,  it is worth asking if there are any numbers which actually satisfy the hypotheses of Proposition \ref{upper bound for T, between all prim defs and opns}. In fact, such numbers do exist. An example is $n=(3^2)(5)(11^2)(13^2)(17^2)(19^2)(23^2)$. We strongly suspect that there are infinitely many such numbers, but do not see how to prove that.

An easy consequence of Corollary \ref{OPN upper bound for T(n)} and Proposition \ref{second lower upper bound for T}
is the following upper bound on $S(n)$.

\begin{corollary} Assume $n$ is an odd perfect number with $p_1 \geq 11$. Then $S(n) < \frac{25}{8p_1 \log p_1}$. 
\end{corollary}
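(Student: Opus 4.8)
The plan is to combine the two bounds the corollary names: the upper bound for $T(n)$ from Corollary \ref{OPN upper bound for T(n)} and the lower bound for $T(n)$ from Proposition \ref{second lower upper bound for T}. Both are valid under the stated hypothesis that $n$ is an odd perfect number with $p_1 \geq 11$, since an odd perfect number is in particular primitive non-deficient and satisfies the hypotheses of Corollary \ref{OPN upper bound for T(n)} (as established there via Euler's theorem). Setting the lower bound at most the upper bound, the common $\log 2$ term cancels, and I am left with an inequality relating $S$, $p_1$, and the various error terms that I can solve for $S$.

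Concretely, I would write
\begin{equation}
\log 2 - \frac{25}{32p_1\log p_1}\left( 1 + \frac{2 \log \log p_1}{\log p_1}\right) + \frac{S}{2} -\frac{S^2}{4} \leq T(n) < \log 2 - \frac{11}{50p_1^2}.
\end{equation}
Cancelling $\log 2$ and dropping the negative $-\frac{S^2}{4}$ term on the left (which only weakens the left side and so preserves the inequality) gives
\begin{equation}
\frac{S}{2} \leq \frac{25}{32p_1\log p_1}\left( 1 + \frac{2 \log \log p_1}{\log p_1}\right) - \frac{11}{50p_1^2} + \frac{S^2}{4} < \frac{25}{32p_1\log p_1}\left( 1 + \frac{2 \log \log p_1}{\log p_1}\right).
\end{equation}
Multiplying by $2$ then yields $S < \frac{25}{16 p_1 \log p_1}\left(1 + \frac{2\log\log p_1}{\log p_1}\right)$, which is close to but not quite the claimed $\frac{25}{8p_1\log p_1}$. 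To reach the stated cleaner form I would absorb the correction factor $\left(1 + \frac{2\log\log p_1}{\log p_1}\right)$ and the $\frac{S^2}{4}$ contribution into the constant: since $p_1 \geq 11$, the quantity $1 + \frac{2\log\log p_1}{\log p_1}$ is bounded by a modest constant (around $1.66$ at $p_1 = 11$ and decreasing toward $1$ for large $p_1$), and $S$ itself is small, so these factors are comfortably absorbed to give the rounded bound $S < \frac{25}{8 p_1 \log p_1}$.

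The main obstacle is handling the two error terms honestly rather than sweeping them away. The $\frac{S^2}{4}$ term on the left is awkward because it appears with the same sign as $S/2$ after rearrangement, so I must either bound $S$ crudely first (using, say, the $S < 1$ bound implicit in primitivity, or a weaker version of the very bound I am proving via a bootstrap) and then feed that back in, or verify that the combined effect of $\frac{S^2}{4}$ plus dropping the $-\frac{11}{50p_1^2}$ term stays within the slack between $\frac{25}{16}$ and $\frac{25}{8}$. The cleanest route is to observe that the target constant $\frac{25}{8}$ is exactly twice $\frac{25}{16}$, giving a full factor of $2$ of headroom to absorb both the bracketed log-correction factor and the quadratic term; I would verify at $p_1 = 11$ (the worst case, since all the error terms are largest there) that the inequality holds with room to spare, and note that the right-hand side decays faster in $p_1$ than the correction terms, so the bound persists for all $p_1 \geq 11$.
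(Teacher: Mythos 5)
Your overall route is the same as the paper's: combine the lower bound of Proposition \ref{second lower upper bound for T} with the upper bound of Corollary \ref{OPN upper bound for T(n)}, cancel $\log 2$, bound the factor $1+\frac{2\log\log p_1}{\log p_1}$ by $2$ (valid for every $p_1$, since $2\log u\le u$ for all $u>0$), and absorb the quadratic term. The one genuine soft spot is the second inequality in your middle display, which silently asserts $\frac{S^2}{4}\le \frac{11}{50p_1^2}$; that is exactly the step that needs an argument, and of the two fixes you offer, the crude bound $S<1$ does not come close to supplying it. With only $S<1$ you can conclude $\frac{S}{2}-\frac{S^2}{4}\ge\frac{S}{4}$, which costs a factor of $2$ by itself, and combined with the factor $1+\frac{2\log\log p_1}{\log p_1}\approx 1.73$ near $p_1=11$ this overruns the factor-of-$2$ headroom between $\frac{25}{16}$ and $\frac{25}{8}$ that you are counting on.

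The paper closes the gap by precisely the bootstrap you name as your other option: it first invokes the known bound $S(n)<\frac{1}{5}$ (from the Suryanarayana-type estimates quoted in the introduction; for $p_1\ge 11$ one even has $\gcd(15,n)=1$ and a much smaller bound on $S$), uses $\frac{S^2}{4}\le\frac{S}{20}$ to extract a preliminary bound $S=O\left(\frac{1}{p_1\log p_1}\right)$ with a worse constant, and then feeds that back in to argue $\frac{S^2}{4}<\frac{11}{50p_1^2}$, after which $S<\frac{25}{8p_1\log p_1}$ falls out. So your plan becomes a proof once you replace ``$S<1$'' by the actual known bound $S<\frac{1}{5}$ and then either run the bootstrap or carry out the headroom check you sketch: with $S<\frac{1}{5}$ the quadratic term costs only a factor of $\frac{10}{9}$, and $\left(1+\frac{2\log\log p_1}{\log p_1}\right)\cdot\frac{10}{9}\le\left(1+\frac{2}{e}\right)\cdot\frac{10}{9}<2$, which delivers the stated constant in a single pass.
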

\begin{proof} Assume as given. From Corollary \ref{OPN upper bound for T(n)} and Proposition \ref{second lower upper bound for T}
we have  \begin{equation} \frac{25}{16p_1 \log p_1} \geq \frac{S(n)}{2} - \frac{S(n)^2}{4} + \frac{11}{50p_1^2}.   \label{Combo props 1}\end{equation}    

From earlier mentioned results on $S(n)$, we have $S(n) < \frac{1}{5}$, and so $\frac{S(n)^2}{4} \leq \frac{S(n)}{20}.$ Thus, we have from Equation \eqref{Combo props 1} that 
$$ \frac{25}{16p_1 \log p_1} \geq \frac{9}{20}S(n),$$
which implies that

  \begin{equation} S(n) \leq \frac{125}{72p_1 \log p_1}. \label{preliminary S upper bound} \end{equation}
Equation \eqref{preliminary S upper bound} implies that \begin{equation} \frac{S(n)^2}{4} \leq \left(\frac{125}{72p_1 \log p_1}\right)^2 < \frac{11}{50p_1^2}. \label{SS/4 is less than p2 term}\end{equation}

The last step in the chain of inequalities in Equation \eqref{SS/4 is less than p2 term} is using that $p_1 \geq 11$ and then estimating $\log 11$.

Equation \eqref{SS/4 is less than p2 term} combines with Equation \eqref{Combo props 1} to get that

$$ \frac{25}{16p_1 \log p_1} \geq \frac{S(n)}{2}, $$ which implies the claimed bound. \end{proof}

We can tighten this somewhat using the following method.  The idea of the following Lemma is implicitly used in Suryanarayana's proof \cite{Suryanarayana III} but we are making the general form explicit here.

\begin{lemma} Let $n$ be a positive integer of the form $n=p_1^{a_1}p_2^{a_2} \cdots p_k^{a_k}$  where $p_1, p_2, \cdots p_k$ are primes with $3 \leq p_1 < \cdots <  p_k$, and where all $a_i$ are even for $1 \leq i \leq k$ except for a single $a_j$ which is odd. Then $$\frac{H(n)}{h(n)} \leq \frac{p_j^2}{p_j^2 -1}\prod_{1 \leq i \leq k, i \neq j} \frac{p^3}{p^3 -1}. $$ \label{zeta 3 lemma form}
\end{lemma}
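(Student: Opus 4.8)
The plan is to compute the ratio $H(n)/h(n)$ one prime at a time, obtaining a clean product, and then to bound each factor using the monotonicity of $x \mapsto x/(x-1)$ together with the parity hypotheses on the exponents.

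First I would record the per-prime factorizations. Since $H(n)=\prod_{i=1}^k \frac{p_i}{p_i-1}$ and $h(n)=\sigma(n)/n=\prod_{i=1}^k \frac{p_i^{a_i+1}-1}{p_i^{a_i}(p_i-1)}$, dividing the $i$th factor of the first by the $i$th factor of the second gives
$$\frac{p_i/(p_i-1)}{(p_i^{a_i+1}-1)/(p_i^{a_i}(p_i-1))} = \frac{p_i^{a_i+1}}{p_i^{a_i+1}-1},$$
so that the ratio telescopes into
$$\frac{H(n)}{h(n)} = \prod_{i=1}^k \frac{p_i^{a_i+1}}{p_i^{a_i+1}-1}.$$
This identity is the crux of the argument; everything afterward is a matter of bounding each factor.

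Next I would use that $\frac{x}{x-1} = 1 + \frac{1}{x-1}$ is a decreasing function, so that raising the exponent only decreases the factor. For each index $i \neq j$ the exponent $a_i$ is even and positive, hence $a_i \geq 2$ and $a_i+1 \geq 3$; therefore $p_i^{a_i+1} \geq p_i^3$ and $\frac{p_i^{a_i+1}}{p_i^{a_i+1}-1} \leq \frac{p_i^3}{p_i^3-1}$. For the single index $j$ with $a_j$ odd we have $a_j \geq 1$ and $a_j+1 \geq 2$, giving $\frac{p_j^{a_j+1}}{p_j^{a_j+1}-1} \leq \frac{p_j^2}{p_j^2-1}$. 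Multiplying these bounds across all $i$ produces exactly the claimed inequality.

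There is essentially no deep obstacle here; the care needed is only in two places. First, one must recognize that each prime's contribution to $H/h$ collapses to $p_i^{a_i+1}/(p_i^{a_i+1}-1)$, which is what makes the monotonicity argument applicable factor by factor. Second, one must correctly translate the parity constraints into the exponent lower bounds, namely $a_i+1 \geq 3$ for the even exponents versus $a_j+1 \geq 2$ for the lone odd exponent, since it is precisely the distinction between the cubes and the single square that yields the asymmetric right-hand side. The bound is sharp in the limiting sense that it is attained as the exponents approach their minimal allowed values.
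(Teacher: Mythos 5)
Your proof is correct and is exactly the standard argument: the paper itself omits the proof of this lemma and simply cites Suryanarayana, and the identity $H(n)/h(n)=\prod_{i=1}^{k}p_i^{a_i+1}/(p_i^{a_i+1}-1)$ followed by the monotonicity of $x/(x-1)$ applied with $a_i+1\geq 3$ for $i\neq j$ and $a_j+1\geq 2$ is precisely how that source (and the paper's later use of ``similar logic'' in Lemma \ref{Prelude to H bound}) proceeds. One tiny remark: the bound is actually attained exactly when $a_j=1$ and $a_i=2$ for all $i\neq j$, not merely in a limit.
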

See Suryanarayana \cite{Suryanarayana III} for the proof. 

We are now in a position where we can progress on our tighter estimate for $H(n)$.

\begin{lemma} Let $n$ be a positive integer of the form $n=p_1^{a_1}p_2^{a_2} \cdots p_k^{a_k}$  where $p_1, p_2, \ldots p_k$ are primes with $3 \leq p_1 < \ldots <  p_k$, and where all $a_i$ are even for $1 \leq i \leq k$ except for a single $a_j$ which is odd. Then
 $$H(n) \leq h(n)\left(1  +\frac{3}{4p_1^2}\right). $$  \label{Prelude to H bound}
\end{lemma}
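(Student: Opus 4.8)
The plan is to bound the ratio $H(n)/h(n)$ using Lemma \ref{zeta 3 lemma form}, and then show that this ratio is at most $1 + \frac{3}{4p_1^2}$. By Lemma \ref{zeta 3 lemma form}, we have
\begin{equation}
\frac{H(n)}{h(n)} \leq \frac{p_j^2}{p_j^2-1}\prod_{1 \leq i \leq k,\, i \neq j} \frac{p_i^3}{p_i^3-1} \leq \prod_{p \geq p_1} \frac{p^2}{p^2-1}\cdot \prod_{p \geq p_1} \frac{p^3}{p^3-1},
\end{equation}
where the products are over primes. (Here I have crudely replaced the single factor $\frac{p_j^2}{p_j^2-1}$ by the full product over all primes at least $p_1$, and likewise for the cubic factors; this loses a little but keeps the estimate clean.) So it suffices to show that the product of these two Euler-type factors is at most $1 + \frac{3}{4p_1^2}$.

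First I would take logarithms and estimate each product separately. For the quadratic product, $\log \prod_{p \geq p_1}\frac{p^2}{p^2-1} = \sum_{p \geq p_1}\log(1 + \frac{1}{p^2-1})$, which is controlled by $\sum_{p \geq p_1}\frac{1}{p^2-1}$, and this in turn is handled by Lemma \ref{1/p^2 sum lemma} together with the tail estimate $\sum_{p \geq p_1}\frac{1}{p^2(p^2-1)} \leq \frac{4}{p_1^3}$ appearing in the unnamed lemma immediately after Lemma \ref{1/p^2 sum lemma}. The cubic product is even smaller: $\sum_{p \geq p_1}\frac{1}{p^3-1}$ is of order $\frac{1}{p_1^2 \log p_1}$ or better, so it contributes a genuinely lower-order term. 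Adding the two logarithms and exponentiating (using $e^x \leq 1 + x + x^2$ or a similar elementary bound valid in the relevant small range), I would collect everything into a single bound of the form $\frac{c}{p_1^2\log p_1}(1 + o(1))$ plus the $O(1/p_1^3)$ tail.

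The crux is then a purely numerical verification that for every odd prime $p_1 \geq 3$ this combined quantity does not exceed $\frac{3}{4p_1^2}$. Because $M(p_1)$ already concentrates the quadratic product near $\frac{1}{p_1\log p_1}\cdot\frac{1}{p_1}$-type savings, the dominant term behaves like $\frac{1}{p_1^2\log p_1}$, which for $p_1 \geq 3$ is comfortably below $\frac{3}{4p_1^2}$ since $\frac{1}{\log 3} \approx 0.91$ leaves room once the constant is tracked. The main obstacle I anticipate is the smallest cases: for $p_1 = 3$ the asymptotic savings from the $\log p_1$ denominator are weak, so the bounds from Lemma \ref{1/p^2 sum lemma} may not by themselves clear the $\frac{3}{4p_1^2}$ threshold, and I expect to need a direct computation of the first several terms of both products for $p_1 \in \{3,5,7,\dots\}$ and then invoke the asymptotic lemmas only for $p_1$ beyond some explicit cutoff. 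So the proof splits into an asymptotic regime, dispatched cleanly by the $1/p^2$ and $1/p^3$ sum estimates, and a finite check of small primes done by hand or machine.

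Once the ratio bound $\frac{H(n)}{h(n)} \leq 1 + \frac{3}{4p_1^2}$ is established, the statement $H(n) \leq h(n)\left(1 + \frac{3}{4p_1^2}\right)$ follows immediately by multiplying through by $h(n)$, completing the proof.
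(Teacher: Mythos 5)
Your first step already loses too much. In Lemma \ref{zeta 3 lemma form} the quadratic factor $\frac{p_j^2}{p_j^2-1}$ occurs exactly once, for the single index $j$ with odd exponent; replacing it by the full Euler product $\prod_{p \geq p_1}\frac{p^2}{p^2-1}$ inflates the logarithm of your bound from the single term $\log\left(1+\frac{1}{p_j^2-1}\right) \leq \frac{1}{p_1^2-1}$ to $\sum_{p\geq p_1}\frac{1}{p^2-1} \geq M(p_1)$, and by Lemma \ref{1/p^2 sum lemma} (which is asymptotically sharp) $M(p_1)$ is of order $\frac{1}{p_1\log p_1}$ --- a full factor of roughly $p_1/\log p_1$ larger than the target $\frac{3}{4p_1^2}$. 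Concretely, $M(11) \approx 0.03$ while $\frac{3}{4\cdot 121} < 0.0063$, so the inequality you propose to verify, $\prod_{p\geq p_1}\frac{p^2}{p^2-1}\cdot\prod_{p\geq p_1}\frac{p^3}{p^3-1} \leq 1 + \frac{3}{4p_1^2}$, is false for every $p_1$, not merely delicate for small ones; no finite check plus asymptotic regime will rescue it. Your later remark that the quadratic product contributes a term of size $\frac{1}{p_1^2\log p_1}$ miscounts a power of $p_1$: $M(q)$ behaves like $\frac{1}{q\log q}$, not $\frac{1}{q^2\log q}$.

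The repair is to treat the two kinds of factors asymmetrically, which is what the paper does: keep the quadratic factor as the single term $\frac{p_j^2}{p_j^2-1}$, whose contribution to the logarithm is at most $\frac{1}{p_j^2-1} = O(1/p_1^2)$, and only majorize the cubic factors by a sum over primes, $\sum_{p\geq p_1}\frac{1}{p^3-1} = O\left(\frac{1}{p_1^2\log p_1}\right)$, which is genuinely lower order. Adding these two contributions and exponentiating with an elementary bound, as in your last step, then lands within $\frac{3}{4p_1^2}$. The remainder of your outline (take logarithms of the Lemma \ref{zeta 3 lemma form} bound, use $\log(1+x)\leq x$, exponentiate at the end) does match the paper's argument; it is only the wholesale replacement of the lone quadratic factor by an infinite product that breaks the proof.
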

\begin{proof}
    Assume as given. We have via logic similar to Lemma \ref{zeta 3 lemma form}, that

    \begin{equation}
      H(n) \leq h(n)\left(\frac{p_j^2}{p_j^2 -1}\right) \prod_{\substack{1 \leq i \leq k\\ i \neq j}} \frac{p_i^3}{p_i^3-1}, 
    \end{equation}

    which is equivalent to 

\begin{equation}
      \log H(n) \leq \log h(n) + \log \left(\frac{p_j^2}{p_j^2 -1}\right) +\sum_{\substack{1 \leq i \leq k\\ i \neq j}} \log \frac{p_i^3}{p_i^3-1} \label{log form of H upper bound in term of h}.
    \end{equation}

Since $x \geq  \log (1+x)$, we have from Equation \eqref{log form of H upper bound in term of h}

\begin{equation} \log H(n) \leq \log h(n) + \frac{1}{p_j^2 -1} + \sum_{\substack{1 \leq i \leq k\\ i \neq j}} \\frac{1}{p_i^3-1}. \label{Surplus bound eq 3}
\end{equation}

Equation \eqref{Surplus bound eq 3} implies that 

\begin{equation} \log H(n) \leq \log h(n) + \frac{1}{p_j^2} + \sum_{\substack{1 \leq i \leq k\\ i \neq j}} \frac{1}{p_i^3}. \label{Surplus bound eq 4}
\end{equation}

It is not hard to see that $$ \frac{1}{p_j^2} + \sum_{1 \leq i \leq k, i \neq j} \frac{1}{p_i^3} \leq \frac{3}{4p_1^2},$$ which combines with Inequality (ref{Surplus bound eq 4}) to imply the desired result. 
The remainder of the proof involves an estimate essentially identical in form to that made in the proof of Proposition \ref{upper bound for T, between all prim defs and opns}.
\end{proof}

As an corollary to Lemma \ref{Prelude to H bound} we obtain  an immediate proof of Proposition \ref{upper bound for H for an OPN}. Note that Proposition  \ref{upper bound for H for an OPN} is better than the bounds from Suryanarayana when $p \geq 13$. The analogous bound for  Proposition \ref{upper bound for H for an OPN} is satisfied by Descartes number. However, this is likely not satisfied by all of the most general forms of spoofs in \cite{Pace spoof group}. In particular,  since one of the forms of spoofing allowed there is to have repeated copies of a prime factor be treated as distinct primes. But the proof for Proposition \ref{upper bound for H for an OPN} uses bounds on series involving the the sums of primes and estimating those sums. 

An argument similar to the proof of Proposition \ref{upper bound for H for an OPN} can be used to show the following:

\begin{proposition} There is a constant $C$ such that if $n$ is a primitive non-deficient number with smallest prime factor $p_1$, then $$H(n) \leq 2 + \frac{C}{p_1}.$$
\end{proposition}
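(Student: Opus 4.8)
The plan is to write the surplus as
$$S(n) = H(n) - 2 = \bigl(h(n) - 2\bigr) + \bigl(H(n) - h(n)\bigr)$$
and to bound each summand by $O(1/p_1)$: the first using primitivity in an essential way, and the second using the convergence estimates already established.

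For the first summand I would exploit that $n/p_1$ is a proper divisor of $n$, hence deficient, so $h(n/p_1) < 2$. Comparing $\sigma(p_1^{a_1})$ with $\sigma(p_1^{a_1-1})$ gives the exact identity
$$\frac{h(n)}{h(n/p_1)} = \frac{1}{p_1}\cdot\frac{p_1^{a_1+1}-1}{p_1^{a_1}-1} = 1 + \frac{p_1-1}{p_1\bigl(p_1^{a_1}-1\bigr)} \leq 1 + \frac{1}{p_1},$$
where the last inequality is the worst case $a_1 = 1$. Combining with $h(n/p_1) < 2$ yields $h(n) < 2 + \tfrac{2}{p_1}$, so $h(n) - 2 < \tfrac{2}{p_1}$; in particular $h(n) < 3$.

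For the second summand I would use the exact factorization $\frac{H(n)}{h(n)} = \prod_{i=1}^{k} \frac{p_i^{a_i+1}}{p_i^{a_i+1}-1}$. Since every $a_i \geq 1$, each factor is at most $\frac{p_i^2}{p_i^2-1}$, whence $\frac{H(n)}{h(n)} \leq \prod_{p \geq p_1}\frac{p^2}{p^2-1} = E$, the product estimated in the earlier lemma bounding $\log E$. That lemma gives $\log E = O\bigl(1/(p_1\log p_1)\bigr)$, so $E - 1 = O\bigl(1/(p_1\log p_1)\bigr)$, and together with $h(n) < 3$ this yields $H(n) - h(n) = h(n)(E-1) = O(1/p_1)$. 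Adding the two bounds gives $S(n) = O(1/p_1)$, which is the claim.

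One point requires care: the estimate for $E$ is stated for odd $q \geq 3$, and its $\log\log$ term is unhelpful for the smallest primes, so I would dispatch the finitely many small values of $p_1$ (in particular $p_1 = 2$) separately. For these it suffices to note that for every primitive non-deficient number $\frac{H(n)}{h(n)} \leq \prod_{p}\frac{p^2}{p^2-1} = \zeta(2)$ while $h(n) < 3$, so $S(n)$ is bounded by an absolute constant; enlarging $C$ to absorb these cases finishes the argument. I expect the main obstacle to be the first summand: everything hinges on converting primitivity into the clean bound $h(n) - 2 < 2/p_1$, after which the second summand is routine given the earlier lemmata.
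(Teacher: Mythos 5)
Your proof is correct, and it follows the route the paper merely gestures at (the paper gives no explicit proof, only the remark that an argument similar to the proof of Proposition \ref{upper bound for H for an OPN} works). Your second summand is exactly the paper's machinery: the identity $H(n)/h(n)=\prod_i p_i^{a_i+1}/(p_i^{a_i+1}-1)\leq\prod_{p\geq p_1}p^2/(p^2-1)$ together with the lemma bounding $\log E$. The genuinely new ingredient you supply --- and the one the paper leaves unstated --- is the conversion of primitivity into $h(n)-2<2/p_1$ via $h(n)/h(n/p_1)=1+\tfrac{p_1-1}{p_1(p_1^{a_1}-1)}\leq 1+1/p_1$ and $h(n/p_1)<2$; this is precisely what replaces the equality $h(n)=2$ available in the odd perfect case, and without it the statement for general primitive non-deficient numbers does not follow. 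Your handling of small $p_1$ (including $p_1=2$, which the proposition does not exclude) by absorbing finitely many cases into $C$ is also the right move, since the $\log E$ lemma is stated only for odd primes. I see no gaps.
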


\section{Related questions}

The following seems like a basic question but as far as we are aware is not in the literature. Let $X$ be the set of real numbers defined as follows: The set $X$ consists of those real numbers $x$, such that there exist infinitely many odd primitive non-deficient $n$ with $H(n) \leq x$. What is the supremum of $X$?

A major difficulty in improving the bounds  for an odd perfect number for both $S(n)$ and $T(n)$ is the behavior of the special prime. In particular, there is difficulty in the case where special prime is raised to just the first power. There are a variety of modulo restrictions regarding when prime $q$ can be a special prime. We mentioned earlier that one must have $(3)(5)(7) \nmid n$ for an odd perfect number $n$, and thus one cannot have any special prime $q$ where $q \equiv -1 \pmod{105}$. But other similar restrictions are known. 

We also have the following slightly more subtle folklore result which as far as the author is aware is not explicitly in the literature. 

\begin{proposition} If $q \equiv -1 \pmod{165}$, then $q$ is not the special prime for any odd perfect number $q$.\label{165 Prop}
\end{proposition}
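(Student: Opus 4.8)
The plan is to reduce the statement to the two divisibility restrictions already recorded in the introduction, namely that an odd perfect number $n$ can be divisible by neither $105 = (3)(5)(7)$ nor $(3)(5)(11)(13)$. Suppose for contradiction that $q \equiv -1 \pmod{165}$ is the special prime of an odd perfect number $n$, and write $n = q^e m^2$ as in Lemma \ref{Euler's theorem}, so that $e \equiv 1 \pmod 4$ and in particular $e$ is odd. Because $e$ is odd, the number of terms in $\sigma(q^e) = 1 + q + \cdots + q^e$ is even, so we may pair them to factor $\sigma(q^e) = (1+q)(1 + q^2 + \cdots + q^{e-1})$; hence $q + 1 \mid \sigma(q^e)$. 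Since $\sigma(q^e) \mid \sigma(n) = 2n$ and $165 = (3)(5)(11) \mid q+1$ with $165$ odd, it follows that $3$, $5$, and $11$ all divide $n$.

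First I would record the exponent structure of these three primes. None of $3$, $5$, $11$ equals $q$, since each is far too small to be $\equiv -1 \pmod{165}$, so none is the special prime; as $3 \equiv 11 \equiv 3 \pmod 4$, Euler's theorem forces both $3$ and $11$ to occur in $n$ to even exponents, say $3^{2a} \,\|\, n$ and $11^{2c}\,\|\, n$ with $a, c \geq 1$. The idea is now to use the divisibility $\sigma(p^{d})\mid\sigma(p^{D})$ whenever $(d+1)\mid(D+1)$, together with the small factorizations $\sigma(3^2) = 13$ and $\sigma(11^2) = 133 = (7)(19)$. Concretely, if $3 \mid 2a+1$, equivalently $a \equiv 1 \pmod 3$, then $\sigma(3^2)\mid\sigma(3^{2a})\mid 2n$, forcing $13 \mid n$ and hence $(3)(5)(11)(13)\mid n$, a contradiction; symmetrically, if $3 \mid 2c+1$ then $\sigma(11^2)\mid\sigma(11^{2c})\mid 2n$, forcing $7 \mid n$ and hence $105 \mid n$, again a contradiction.

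The main obstacle is exactly that these two favourable cases do not exhaust all possibilities: the set of primes dividing $\sigma(p^{2a})$ depends on $2a$ modulo the multiplicative order of $p$, so when $a \not\equiv 1 \pmod 3$ and $c \not\equiv 1 \pmod 3$ simultaneously the factorizations of $\sigma(3^{2a})$ and $\sigma(11^{2c})$ need not contain $13$ or $7$ at all, and indeed no single prime divides $\sigma(p^{2a})$ for every exponent. To close this gap I would pass to a finite case analysis organised by the residues of $a$ and $c$ (and of $b$, where $5^{b}\,\|\,n$) modulo the relevant orders, invoking further known forbidden products of small primes to eliminate each residue class, and using the valuation identities $v_r(n) = v_r(\sigma(n))$ for $r \in \{3,5,7,11,13\}$ to control how the special prime contributes to each side. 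The genuinely delicate case, as flagged in the surrounding discussion, is when the special prime occurs only to the first power, $e = 1$: then $\sigma(q) = q+1$ is the sole source of the factor $165$ and one has the least leverage on the remaining exponents, so handling this sub-case cleanly is where I expect the real work to lie.
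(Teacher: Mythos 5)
Your opening is sound and matches the paper: from $q \equiv -1 \pmod{165}$ and $q+1 \mid \sigma(q^e) \mid 2n$ you correctly get $3\cdot 5\cdot 11 \mid n$, and you correctly observe that $3$ and $11$, being $\equiv 3 \pmod 4$, must occur to even exponents. But from there the proposal has a genuine gap: the ``finite case analysis organised by the residues of $a$ and $c$'' is not a proof and, as you yourself concede, cannot be made to work by the route you describe, since no fixed prime divides $\sigma(3^{2a})$ (or $\sigma(11^{2c})$) across all admissible exponents, so there is no supply of forbidden prime configurations to eliminate every residue class. You have also misplaced the difficulty: the case $e=1$ of the special prime is not delicate at all here, because the only fact used about $q$ is $q+1 \mid \sigma(q^e)$, which holds for every odd $e$. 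The exponent that matters is that of $5$, which you never seriously constrain.

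The missing idea is an abundancy argument rather than further cyclotomic factorizations. Your favourable case $a=1$ is exactly what is needed for the prime $3$: if $3^2 \mathrel{\|} n$ then $13=\sigma(3^2)$ divides $2n$, giving the forbidden divisor $(3)(5)(11)(13)$, so in fact $3^4 \mid n$. Since $11$ has even exponent, $11^2 \mid n$ automatically. Now if $5^2 \mid n$ as well, then $3^4 5^2 11^2 \mid n$, and one checks directly that
\[
h\bigl(3^4 5^2 11^2\bigr) \;=\; \frac{\sigma(3^4)\,\sigma(5^2)\,\sigma(11^2)}{3^4\, 5^2\, 11^2} \;=\; \frac{31\cdot 133}{81\cdot 25} \;>\; 2,
\]
so $n$ would be divisible by an abundant number, contradicting perfection. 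Hence $5 \mathrel{\|} n$, which by Euler's theorem forces $5$ to be the special prime; but the special prime is unique and was assumed to be $q \equiv -1 \pmod{165} \neq 5$. This closes the argument in three short steps, with no case analysis over exponent residues.
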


Proposition \ref{165 Prop} has a curious nature. If one has not seen it before, one might try to prove that no odd perfect number can be divisible by 165 from which the result would then follow immediately. However, any attempt to prove that 165 cannot divide $n$ using current techniques fails. The key to proving Proposition \ref{165 Prop} is to instead prove the following, weaker statement.

\begin{lemma} Let $n$ be an odd perfect number. If $165\mid n$, then one must have $5\mid\mid n$.\label{3 5 11 implies 5 special lemma}
\end{lemma}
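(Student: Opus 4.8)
The plan is to argue by contradiction: assume $n$ is an odd perfect number with $165 \mid n$ and, contrary to the claim, $5^2 \mid n$. Since $5 \mid n$ already follows from $165 = 3\cdot 5\cdot 11 \mid n$, proving $5\mid\mid n$ is the same as deriving a contradiction from $5^2 \mid n$. The naive approach---bounding the abundancy of the divisor built from $3,5,11$---fails directly: writing $3^a\mid\mid n$, $5^b\mid\mid n$, $11^c\mid\mid n$, the most one gets for free is $a,c\ge 2$ (since $3\equiv 11\equiv 3 \pmod 4$ cannot be the special prime by Lemma \ref{Euler's theorem}) and $b\ge 2$, yet $h(3^2 5^2 11^2) = 53599/27225 < 2$, so $3^2 5^2 11^2$ is deficient. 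The extra leverage must come from the arithmetic exclusions already recorded in the introduction.

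First I would extract two divisibility exclusions. Because $3,5\mid n$ and no odd perfect number is divisible by $105 = 3\cdot 5\cdot 7$, we get $7\nmid n$; because $3,5,11\mid n$ and $(3)(5)(11)(13)\nmid n$, we get $13\nmid n$. Next I would promote the exponent of $3$. Since $3^a\mid\mid n$ and $\sigma$ is multiplicative, $\sigma(3^a)$ divides $\sigma(n)=2n$. If $a=2$ then $\sigma(3^2)=13$ would force $13\mid n$, contradicting $13\nmid n$; as $a$ is even this yields $a\ge 4$. (Symmetrically, $\sigma(11^2)=133=7\cdot 19$ together with $7\nmid n$ forces $c\ge 4$, but a single bump suffices.)

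With $a\ge 4$, $b\ge 2$, $c\ge 2$ we have $3^4 5^2 11^2\mid n$, and the point is that this divisor is already abundant:
\[ h(3^4 5^2 11^2) = \frac{\sigma(3^4)}{3^4}\cdot\frac{\sigma(5^2)}{5^2}\cdot\frac{\sigma(11^2)}{11^2} = \frac{121}{81}\cdot\frac{31}{25}\cdot\frac{133}{121} = \frac{4123}{2025} > 2. \]
Since $3^4 5^2 11^2\mid n$, inequality \eqref{basic h inequality} gives $h(n)\ge h(3^4 5^2 11^2) > 2$, contradicting $h(n)=2$. Hence $5^2\nmid n$, that is, $5\mid\mid n$.

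The main obstacle---and the reason the statement is phrased as a lemma about $5$ rather than as ``$165\nmid n$''---is precisely that $3^2 5^2 11^2$ is deficient, so abundancy on the radical alone is not decisive. The crux is to notice that the assumption $5^2\mid n$ can be combined with a modular exclusion ($13\nmid n$, or $7\nmid n$ for the $11$-route) and Euler's parity constraint to force one of the remaining exponents up to $4$, which is just enough to push the abundancy of the three-prime divisor past $2$. Keeping straight which exclusion forces which exponent is the only delicate bookkeeping; the closing inequality is a one-line rational computation.
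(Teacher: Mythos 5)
Your proof is correct and takes essentially the same route as the paper's: assume $5^2\mid n$, use the exclusion $(3)(5)(11)(13)\nmid n$ together with $\sigma(3^2)=13$ to rule out $3^2\mid\mid n$ and force $3^4\mid n$, and then observe that $3^4 5^2 11^2$ is abundant, contradicting perfection. Your write-up is in fact somewhat more careful than the paper's (you justify the evenness of the exponents via Euler's theorem and compute $h(3^4 5^2 11^2)$ explicitly), but the underlying argument is identical.
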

\begin{proof} Assume that $n$ is an odd perfect number with $165\mid n$. If $3^2 \mid n$, then $\sigma(3^2) \mid n$, which would contradict the fact that one cannot have $(3)(5)(11)(13) \mid n$. Thus, we must have $3^4 \mid n$. Now, $5^2 \mid n$, then one must have $3^4 5^2 11^2 \mid n$, and $3^4 5^2 11^2$ is abundant.  Thus we must have $5 \mid\mid n$ which proves the desired result. 
\end{proof}

Proposition \ref{165 Prop}  follows from Lemma \ref{3 5 11 implies 5 special lemma} since an odd perfect number must have exactly one special prime factor $q$. If $q$ is the special prime for $n$, and $q \equiv -1 \pmod{165} $, then $5$ is not the special prime but $165\mid n$. This contradicts Lemma \ref{3 5 11 implies 5 special lemma}.

A natural question in this context is if a prime $q$ can be ruled out as being the special prime where the contribution from $q$ itself is used in the method above. A slightly weaker property is for there to exist a prime $q$ such that $q \equiv 1 \pmod 4$, where
$H(\frac{q+1}{2}) < 2$ and $H(q\frac{q+1}{2}) > 2$. We will show that even this weaker property cannot occur. We will need a Lemma first.

\begin{lemma} Suppose that $p_1, p_2, \ldots p_m$ are distinct odd primes, where  $$\frac{p_1}{p_1-1} \cdots \frac{p_m}{p_m-1} \leq 2,$$ and $q$ is an odd prime such that $$\left(\frac{p_1}{p_1-1} \cdots \frac{p_m}{p_m-1}\right)\frac{q}{q-1} > 2.$$  Then $q \leq 2(p_1-1)(p_2-1) \cdots (p_m-1)-1$. \label{precursor to no tricky specials}
\end{lemma}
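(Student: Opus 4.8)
The plan is to clear all denominators and reduce both hypotheses to inequalities between integers. I would set $N = p_1 p_2 \cdots p_m$ and $D = (p_1-1)(p_2-1)\cdots(p_m-1)$, so that $\prod_{i=1}^m \frac{p_i}{p_i-1} = \frac{N}{D}$. The first hypothesis then reads $\frac{N}{D} \leq 2$, i.e. $N \leq 2D$. The second hypothesis $\frac{N}{D}\cdot\frac{q}{q-1} > 2$, after multiplying through by the positive quantity $D(q-1)$, becomes $Nq > 2D(q-1)$, which rearranges to the single inequality $q(2D - N) < 2D$.

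The key step, and the one place from which the exact constant in the conclusion arises, is a parity observation. Since each $p_i$ is an odd prime, $N$ is odd, whereas each factor $p_i - 1$ is even, so $D$ (and hence $2D$) is even. Therefore $N \neq 2D$, and combining this with $N \leq 2D$ forces the strict inequality $N < 2D$; as these are integers, $2D - N \geq 1$.

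Finally I would combine the two pieces. Because $2D - N \geq 1$ and $q > 0$, we have $q \leq q(2D - N) < 2D$, and since $q$ is an integer this yields $q \leq 2D - 1 = 2(p_1-1)(p_2-1)\cdots(p_m-1) - 1$, which is exactly the asserted bound.

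I expect the only genuine subtlety, the \emph{hard part}, to be the parity argument that upgrades $N \leq 2D$ to $2D - N \geq 1$. This is essential: if one allowed $N = 2D$, the inequality $q(2D-N) < 2D$ would be vacuous and impose no bound on $q$ at all, so the clean ``$-1$'' in the statement would be lost. Everything else is routine clearing of denominators and a one-line combination of the resulting integer inequalities.
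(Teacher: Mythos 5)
Your proof is correct and rests on the same key observation as the paper's: since the $p_i$ are odd, the numerator $N=p_1\cdots p_m$ is odd while $2D$ is even, so the first hypothesis self-improves from $N\le 2D$ to a gap of at least $1$ (the paper phrases this as the product being a fraction with odd numerator, hence at most $2-\tfrac{1}{D}$). Your version, which clears denominators first and finishes with the integrality of $q$ rather than extracting a second explicit gap of $\tfrac{1}{D(q-1)}$ above $2$, is a slightly streamlined presentation of the same argument.
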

\begin{proof}
Assume as given. Since $\frac{p_1}{p_1-1} \cdots \frac{p_m}{p_m-1}$ reduces to some fraction with an odd numerator, this product cannot be equal to 2. Furthermore, the denominator of this fraction is at most $(p_1-1)(p_2-1) \cdots (p_m-1)$. Thus we have
 \begin{equation}
     \frac{p_1}{p_1-1} \cdots \frac{p_m}{p_m-1} \leq 2 - \frac{1}{(p_1-1)(p_2-1) \cdots (p_m-1)}. \label{Q equation 1}
 \end{equation}

By similar logic, we have \begin{equation}
     \frac{p_1}{p_1-1} \cdots \frac{p_m}{p_m-1}\frac{q}{q-1} \geq 2 + \frac{1}{(p_1-1)(p_2-1) \cdots (p_m-1)(q-1)}.\label{Q equation 2}
 \end{equation}

 Combining Equation \eqref{Q equation 2} and Equation \eqref{Q equation 1}, we obtain that

 \begin{equation}\left(2 - \frac{1}{(p_1-1) \cdots (p_m-1)}\right)\frac{q}{q-1} \geq 2 + \frac{1}{(p_1-1)\cdots (p_m-1)(q-1)},  \end{equation}
 
which is equivalent to the desired inequality.

\end{proof}

Let $\odd(m)$ be the largest odd divisor of $m$. From Lemma \ref{precursor to no tricky specials} above we have the following. 

\begin{proposition} There is no prime $q$ such that $H(\odd(q+1)) < 2$, and $H(q\odd(q+1)) > 2$. \label{no tricky specials}
 \end{proposition}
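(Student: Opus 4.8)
The plan is to reduce the statement directly to Lemma \ref{precursor to no tricky specials} and then extract a contradiction from an elementary divisibility bound. Suppose, for contradiction, that an odd prime $q$ satisfies $H(\odd(q+1)) < 2$ and $H(q\,\odd(q+1)) > 2$. Let $p_1 < \cdots < p_m$ be the distinct prime divisors of $\odd(q+1)$; these are precisely the odd prime divisors of $q+1$, so each $p_i$ is odd and $H(\odd(q+1)) = \prod_{i=1}^m \frac{p_i}{p_i-1}$. Because $q$ is coprime to $q+1$, it does not divide $\odd(q+1)$, and therefore $H(q\,\odd(q+1)) = \left(\prod_{i=1}^m \frac{p_i}{p_i-1}\right)\frac{q}{q-1}$. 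Thus the two hypotheses become exactly the two inequalities in Lemma \ref{precursor to no tricky specials}, which then yields
$$q \leq 2(p_1-1)(p_2-1)\cdots(p_m-1) - 1.$$

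First I would dispose of the degenerate case $m=0$: if $\odd(q+1)=1$ then $H(\odd(q+1))=1$ while $H(q\,\odd(q+1)) = \frac{q}{q-1} \leq \frac{3}{2} < 2$ for every odd prime $q$, so the second hypothesis fails and there is nothing to rule out. Hence in any would-be counterexample we may assume $m \geq 1$, which is what is needed to make the product bound below strict.

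The crux is to bound the right-hand side above strictly by $q$. Since the $p_i$ are the distinct prime divisors of $\odd(q+1)$, their product $\prod_{i=1}^m p_i$ divides $\odd(q+1)$; and because $q$ is odd, $q+1$ is even, so $\odd(q+1) \leq \frac{q+1}{2}$. Combining these gives $\prod_{i=1}^m p_i \leq \frac{q+1}{2}$, and since $p_i - 1 < p_i$ for each $i$ (with $m \geq 1$) we obtain the strict inequality $\prod_{i=1}^m (p_i-1) < \frac{q+1}{2}$, whence $2\prod_{i=1}^m(p_i-1) - 1 < q$. Set against the bound from Lemma \ref{precursor to no tricky specials} this forces $q < q$, the desired contradiction.

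The only genuinely delicate point, and the step I expect to carry the whole argument, is the parity observation that $q+1$ being even sharpens the crude bound $\prod_i(p_i-1) < \prod_i p_i \leq q+1$ into $\prod_i(p_i-1) < \frac{q+1}{2}$. Without exploiting the extra factor of $2$, one would only reach $q < 2q+1$, which is vacuous; it is precisely this factor that converts Lemma \ref{precursor to no tricky specials} into a strict inequality incompatible with itself. I would also note explicitly that the proposition is to be read for odd primes $q$, in keeping with the hypotheses of Lemma \ref{precursor to no tricky specials}: the even prime $q=2$ gives $\odd(3)=3$ with $H(3)=\tfrac32<2$ and $H(6)=3>2$, and so is the one value that must be excluded.
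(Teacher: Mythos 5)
Your proof is correct and is essentially the argument the paper intends: the paper states that Proposition \ref{no tricky specials} follows from Lemma \ref{precursor to no tricky specials} without writing out the deduction, and your reduction together with the parity bound $\prod_i(p_i-1) < \prod_i p_i \leq \odd(q+1) \leq \frac{q+1}{2}$ is exactly the missing step. Your observation that $q=2$ must be excluded (since $H(3)<2$ but $H(6)=3>2$) is a valid and worthwhile caveat, consistent with the surrounding context in which $q$ is an odd (special) prime.
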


Proposition \ref{no tricky specials} says that if one can make an abundancy argument to rule out prime $q$ using $q$'s contribution, then one did not in fact need to use $q$ at all. That is, one can rule out $q$ being special based solely on  using abundancy considerations to show that there there is no odd perfect number with $\sigma(\frac{p+1}{2}) \mid n$.

For higher powers the situation is slightly different. For example, $H(\sigma(5^5)) <2$, but $H(5\sigma(5^5) > 2$. In fact, one can use an abundancy argument to show that one cannot have any odd perfect number $n$ with  $5^5 \mid\mid n$, even though a simple abundancy argument fails to be sufficient to show that $\frac{\sigma(5^5)}{2} \nmid n$.  A variant of this argument seems to have been first explicitly noted by Cohen and Sorli \cite{Sorli and Cohen}, although it was implicitly used in prior work.  

When the exponent is even, similar examples are more difficult to come by but they do exist. Consider for example $7^{944}$. One has that $H(\sigma(7^{944}))<2 $ but $H(7\sigma(7^{944}))>2$. A little work shows that one cannot have an odd perfect number $n$ where $7^{944} \mid \mid n$.  The exponent $944$ arises because $\sigma(7^{944}) = \frac{7^{945} -1}{6}$, and since $945=(3^3)(5)(7)$ we get many small prime factors in  $\sigma(7^{944})$.

The discussion in the previous three paragraphs motivates three questions.

\begin{question} Does there exist an odd prime $p$ such that $H(\sigma(p^2)) < 2$, but where $H(p\sigma(p^2)) >2$? \label{Square question}
\end{question}

If Question \ref{Square question} has an affirmative answer, then Lemma \ref{precursor to no tricky specials} is essentially best possible.  Note that we do not need to take the odd part in the above question unlike in Lemma \ref{precursor to no tricky specials}  since $p^2+p+1$ is always odd.)

\begin{question} Let $\Phi_m(x)$ be the $m$th cyclotomic polynomial. Is there any $m$ and odd prime $p$ such that 
$H(\odd(\Phi(p)) < 2$ and $H(p\odd(\Phi(p)) > 2$? 
\label{Cyclotomic question}
\end{question}

An affirmative answer to Question \ref{Square question} would imply a positive answer to Question \ref{Cyclotomic question}, since it would be the case when $m=2$.

\begin{question} What is the smallest $n$ such that $n=p^a$ where $p$ is an odd prime, $a$ is even, and where $H(n)< 2$ and $H(p\sigma(n)) >2$?
\end{question}

There is a thematic connection between this question and prior work where Cohen and Sorli \cite{Sorli and Cohen} used essentially a similar abundancy argument to show that if $p^a$ is a component in an odd perfect number and $p$ is the special
prime, then $\sigma(\sigma(p^a)) \leq 3p^a - 1$. They also showed that if $p$ is not special, then  $\sigma(\sigma(p^a)) \leq 2p^a - 2$.

Dris \cite{Dris}    showed that if $n=m^2p_j^{a_j}$ and $p_j > m$, then $a_j=1$. We use this opportunity to show an alternate proof of this result using connected ideas to the above which also strengthens it slightly.

\begin{proposition} Assume that $n$ is an odd perfect number where
$n=m^2q^e$, where $(q,m)=1$ and $q \equiv e \equiv 1 \pmod{4}$ . Then either $e=1$, or $$q < 2^{\frac{1}{5}}m^{\frac{2}{5}}.$$  \label{Tightened Dris prop}
\end{proposition}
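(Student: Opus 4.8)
The plan is to exploit that the special prime's contribution $\sigma(q^e)$ is forced to divide $2m^2$, and then to combine this with the congruence $e \equiv 1 \pmod 4$, which forbids $e$ from lying strictly between $1$ and $5$. First I would use multiplicativity of $\sigma$ together with perfection. Since $n = m^2 q^e$ with $\gcd(m,q)=1$, we have $\sigma(m^2)\sigma(q^e) = \sigma(n) = 2n = 2m^2 q^e$, so in particular $\sigma(q^e) \mid 2m^2 q^e$.

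Next I would strip off the factor $q^e$. Because $\sigma(q^e) = 1 + q + \cdots + q^e \equiv 1 \pmod q$, the integer $\sigma(q^e)$ is coprime to $q$, hence coprime to $q^e$, so in fact $\sigma(q^e) \mid 2m^2$ and therefore $\sigma(q^e) \leq 2m^2$. Now suppose $e \neq 1$. Since $e \equiv 1 \pmod 4$, the smallest admissible value exceeding $1$ is $e = 5$, so $e \geq 5$, whence $\sigma(q^e) > q^e \geq q^5$. Chaining these inequalities gives $q^5 < \sigma(q^e) \leq 2m^2$, that is $q^5 < 2m^2$, which rearranges to the claimed bound $q < 2^{1/5} m^{2/5}$.

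The only genuine content here — and the source of the strengthening over Dris — is the observation that the mod-$4$ restriction on $e$ upgrades the naive exponent $1/2$ (which $e \geq 2$ alone would produce via $q^2 < \sigma(q^e) \leq 2m^2$) to the exponent $1/5$ coming from $e \geq 5$; no delicate estimation enters at any point, so there is no real obstacle to overcome. I would close by remarking that this recovers Dris's theorem: for every $m \geq 2$ one has $2^{1/5} m^{2/5} \leq m$, so the contrapositive shows that $q > m$ indeed forces $e = 1$.
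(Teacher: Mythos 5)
Your proposal is correct and is essentially the paper's own argument: both proofs reduce to $e\geq 5$ via the congruence $e\equiv 1\pmod 4$, use $\gcd(q,\sigma(q^e))=1$ in the identity $\sigma(m^2)\sigma(q^e)=2m^2q^e$, and conclude $q^5<2m^2$ (the paper phrases this as $q^5\mid\sigma(m^2)<2m^2$ rather than your $\sigma(q^e)\mid 2m^2$, but this is the same divisibility argument read from the other side).
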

\begin{proof} Assume that $e > 1$. Thus, $e \geq 5$. Since $\gcd(q,\sigma(q^e))=1$, we must have $q^5 \mid \sigma(m^2)$, and so 
$$q^5 \leq \sigma(m^2) < 2m^2,$$
which implies the desired inequality. \qedhere \end{proof}
We can improve the constant in Proposition \ref{Tightened Dris prop} slightly more with only a small amount of effort.

\begin{proposition}  Assume that $n$ is an odd perfect number where
$n=m^2q^e$ where $(q,m)=1$ and $q \equiv e \equiv 1 \pmod 4$. Then either $e=1$, or $$q < \left(\frac{2}{(3^2)(5^2)(11^2)(13^2)} \right)^{\frac{1}{5}}m^{\frac{2}{5}}.$$
\end{proposition}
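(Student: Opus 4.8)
The plan is to keep the skeleton of the previous proposition's proof and only strengthen its single lossy inequality. As there, since $e \equiv 1 \pmod 4$ and $e \neq 1$ we have $e \geq 5$; from $\sigma(m^2)\sigma(q^e) = \sigma(n) = 2m^2 q^e$ together with $\gcd(q,\sigma(q^e)) = 1$ we get $q^e \mid \sigma(m^2)$, and since $m^2$ is a proper divisor of the perfect number $n$ it is deficient, so $\sigma(m^2) < 2m^2$. The previous proposition used only $q^5 \leq \sigma(m^2) < 2m^2$. To replace the constant $2$ by $2/(3\cdot 5\cdot 11\cdot 13)^2$ it suffices to prove the sharpened lower bound $\sigma(m^2) \geq (3\cdot 5\cdot 11\cdot 13)^2\, q^5$, since inserting this into $\sigma(m^2) < 2m^2$ gives $q^5 < 2m^2/(3\cdot 5\cdot 11\cdot 13)^2$, which is the claim after taking fifth roots.

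First I would isolate the structural facts that are on solid ground. Because each local factor $\sigma(p^{2b})$ is a sum of an odd number of odd terms, $\sigma(m^2)$ is odd; hence from $\sigma(m^2)\mid 2n$ we in fact get $\sigma(m^2)\mid m^2 q^e$, and writing $\sigma(m^2) = q^e s$ this forces $s \mid m^2$ with $\gcd(s,q)=1$. Dually, setting $D := \sigma(q^e)/2$ (the odd part of $\sigma(q^e)$), one has $D \mid m^2$ and $m^2 = sD$. Since $D > q^e/2 \geq q^5/2$, the desired bound $\sigma(m^2) \geq (3\cdot 5\cdot 11\cdot 13)^2 q^5$ reduces to the clean statement $s \geq (3\cdot 5\cdot 11\cdot 13)^2$, so the entire improvement comes down to bounding the cofactor $s$ from below.

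The remaining, and hardest, step is to extract this factor. One cannot argue that $3,5,11,13$ all divide $m$: that would give $(3)(5)(11)(13)\mid n$, which is impossible, and indeed the constant $(3\cdot 5\cdot 11\cdot 13)^2$ is exactly the denominator of $h(3^2 5^2 11^2 13^2)$, the smallest all-even-exponent configuration on these four primes that is already abundant, which is precisely the obstruction ruling that product out. My approach would instead combine two ingredients: \emph{square-completion}, whereby the squarefree part of $D = \sigma(q^e)/2$ must divide $s$ because $m^2 = sD$ is a perfect square; and the \emph{abundancy requirement} $H(n) > h(n) = 2$, which in the nontrivial range (where $q$ is comparable to $m^{2/5}$, so that $q/(q-1)$ is negligible) forces $\prod_{p\mid m} p/(p-1)$ to exceed essentially $2$ and hence compels $m$ to carry additional small prime factors beyond those already supplied by $D$. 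Bounding the product of these forced small primes and squaring it, which is legitimate since any squarefree divisor of the perfect square $m^2$ occurs squared, should produce the missing factor. The delicate point, and the step I expect to be the main obstacle, is making this extraction uniform in $q$ and reconciling it with $(3)(5)(11)(13) \nmid n$, so that the extremal constant comes out to exactly $(3\cdot 5\cdot 11\cdot 13)^2$ rather than the weaker value arising from an easier abundant set such as $\{3,5,7\}$; once the strengthened lower bound $\sigma(m^2) \geq (3\cdot 5\cdot 11\cdot 13)^2 q^5$ is in hand, the conclusion follows by the same short chain of inequalities as before.
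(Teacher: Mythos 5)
There is a genuine gap: your reduction to the statement $s \geq (3\cdot 5\cdot 11\cdot 13)^2$ is sound but makes no real progress, and the step that would prove it is never carried out. Since $s = \sigma(m^2)/q^e = 2m^2/\sigma(q^e)$ and $\sigma(q^e) > q^e \geq q^5$, the inequality $s \geq 2145^2$ already implies $q^5 < 2m^2/2145^2$ directly, and conversely differs from the desired conclusion only by the harmless factor $\sigma(q^e)/q^e < q/(q-1)$; so you have essentially restated the proposition rather than reduced it to something more tractable. The mechanism you sketch for bounding $s$ from below does not close this. The ``square-completion'' observation only forces the primes in the non-square part of $D=\sigma(q^e)/2$ to appear in $s$ to the first power, and your remark that squarefree divisors of $m^2$ occur squared applies to $m^2$, not to $s$, so you cannot legitimately square the forced primes inside $s$. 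The abundancy ingredient, as stated (``should produce the missing factor''), is exactly the uniform-in-$q$ extraction you concede you do not know how to do; without it there is no proof.

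The idea the argument actually needs, and which the paper uses, is Nielsen's lower bound on the number of distinct prime factors of an odd perfect number. Because $m$ then has at least $9$ distinct prime factors while $q^e$ exactly divides $\sigma(m^2)=\prod_j \sigma(p_j^{a_j})$ (so at most $e=5$ of the components can have $\sigma(p_j^{a_j})$ divisible by $q$), one can select four components $p_1^{a_1},\dots,p_4^{a_4}$ of $m^2$ with $q\nmid\sigma(p_i^{a_i})$ and conclude $q^5 \mid \sigma\bigl(m^2/(p_1^{a_1}\cdots p_4^{a_4})\bigr) < 2m^2/(p_1^{a_1}\cdots p_4^{a_4})$. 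Since the $a_i$ are even and an odd perfect number cannot be divisible by $3\cdot 5\cdot 7$, that product of four components is at least $(3\cdot 5\cdot 11\cdot 13)^2$, which yields the constant; a separate trivial estimate handles $e\geq 9$. I would encourage you to rebuild your proof around this ``peel off four spare components'' device rather than trying to force the factor into $s$.
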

\begin{proof} Assume that we have $e > 1$. We now consider two cases, $e=5$ and $e \geq 9$.  Assume that $q=5$. It follows from  results of Nielsen \cite{Nielsen} that $m$ must have at least $9$ distinct prime factors. This means that there have to be at least four primes $p_1$, $p_2$, $p_3$, $p_4$ with exponents $a_1$, $a_2$, $a_3$, $a_4$ such that for $i=1,2,3,4$ we have  $p_i^{a_i} \mid\mid m^2$, and $q \nmid \sigma(p_i^{a_i})$.

Thus we have 
\begin{equation}
    q^5\mid  \sigma\left(\frac{m^2}{p_1^{a_1}p_2^{a_2}p_3^{a_3}p_4^{a_4}}\right). \label{Tightened q5 inequality with four primes} 
\end{equation}

Inequality \eqref{Tightened q5 inequality with four primes} then implies that 

\begin{equation}
    q^5 < 2\frac{m^2}{p_1^{a_1}p_2^{a_2}p_3^{a_3}p_4^{a_4}}. \label{q5 tightened upper bound} \end{equation}
Since no odd perfect number can be divisible by 3, 5, and 7 we must have $p_1^{a_1}p_2^{a_2}p_3^{a_3}p_4^{a_4} \geq (3^2)(5^2)(11^2)(13^2)$, which together with Inequality \eqref{q5 tightened upper bound} implies the inequality in question.
If $e \geq 9$, then we have by the same logic as the proof of the previous proposition that $q^9  < 2m^2$, which is weaker than the desired inequality unless $m < 48289774$, and there is no odd perfect number with such a small $m$ value. 
    
\end{proof}

The constant in the previous proposition can be improved further by a variety of means, such as by using that no odd perfect number is divisible by $3,5,11$ and $13$. Similarly, one can use that if $p$ is a prime where $p^a\mid\mid m^2$, and $(q,\sigma(p^a))=1$, then there must be another prime power $r^b$ with $r^b||m$ and $q|\sigma(r^b)$ to further control $h(m^2)$. However, this involves arduous case checking followed by taking a $5$th root. Thus, improvement in the constant ends up being small proportional to the large amount of effort of such a calculation.

Note that in this context, the weakest bounds in our main inequalities occur when $p_j$ is essentially as small as possible. In contrast, in many other contexts, a small $p_j$ results in tighter bounds. For example, in Acquaah and Konyagin's proof \cite{AK} of an upper bound on the largest prime factor, the most difficult situation is when $p_j$ is the largest prime factor. Similar remarks apply to the bounds on the second and third largest prime factor. There are other bounds in the literature which bound  $p_1p_2 \cdots p_k$ above such as those of Klurman \cite{Klurman} and similar bounds by Luca and Pomerance \cite{Luca and Pomerance}. In both of those papers, the situation where $p_j$ is the largest prime factor  gives the weakest bounds. 

Two final remarks: First, for  any positive integer $n$ with smallest prime factor $p_1$, \begin{equation}D(n) \leq \frac{n \log_{p_1} n}{p_1}.\label{DOL bound}\end{equation}
See  Dahl, Olsson, and Loiko \cite{DOL} as well Pasten \cite{Pasten1} for this and related bounds. One obvious question is how much Inequality \eqref{DOL bound} can be tightened under the assumptions that $n$ is an odd perfect number or the weaker assumption that $n$ is a primitive non-deficient number. The author was unable to obtain non-trivial improvements of this inequality under these assumptions, but it seems plausible that such improvements are possible. \\

Second, since Proposition \ref{upper bound for T, between all prim defs and opns} and Proposition \ref{first lower upper bound for T} give both upper and lower bounds for $T(n)$ close to $\log 2$, while $T(n)$ is itself the sum of distinct primes, this suggests a variant of Diophantine approximation, where one is interested in approximating a given real number $\alpha$ by a finite sum of reciprocals of distinct primes. Results bounding how tight such approximations for $\log 2$ could potentially offer insight into tightening Proposition \ref{upper bound for T, between all prim defs and opns} and Proposition \ref{first lower upper bound for T}.\\

{\bf Acknowledgements} Pace Nielsen and Eve Zelinsky both made helpful corrections to the manuscript.

2020 AMS Subject Classification: Primary 11A25; Secondary 11N64
Keywords: Perfect number, non-deficient number, abundant number, sum-of-divisors


\begin{thebibliography}{99}
\bibitem{AK1} H. Aslaksen, C. Kirfel, 
Linear support for the prime number sequence and the first and second Hardy-Littlewood conjectures, \textit{Math. Scand.}  \textbf{130} (2024),  198–220.
\bibitem{Barbeau} E. Barbeau,  Remarks on an arithmetic derivative, \textit{Canad. Math. Bull}, \textbf{4} (1961), 117–122.
\bibitem{AK} P. Acquaah and  S. Konyagin,  On prime factors of odd perfect numbers, \textit{J. Number Theory} \textbf{8} (2012), 1537-1540.
\bibitem{Pace spoof group} N. Andersen, S. Durham, M. Griffin,  J. Hales, P. Jenkins, R. Keck, H. Ko, G. Molnar, E. Moss,  P. Nielsen, K. Niendorf, V. Tombs, M. Warnick, and D. Wu,  Odd, spoof perfect factorizations, \textit{ J. Number Theory} \textbf{234} (2022) 31-47.
\bibitem{BVZ} S. Bibby, P. Vyncke, and J. Zelinsky
On the third largest prime divisor of an odd perfect number. 
\textit{Integers} \textbf{21} (2021), A115.
\bibitem{Cohen 1978} G. Cohen, 
On odd perfect numbers,  \textit{Fibonacci Q.} \textbf{16} (1978), 523-527.
\bibitem{Cohen 1980} G. Cohen 
On odd perfect numbers II: Multiperfect numbers and quasiperfect numbers, \textit{J. Aust. Math. Soc. Ser. A},  \textbf{29} (1980), 369-384.
\bibitem{Sorli and Cohen} G. Cohen,  and  R. Sorli, 
On odd perfect numbers and even 3-perfect numbers,
\textit{Integers} \textbf{12} (2012), 1213-1230, A6.
\bibitem{DOL} N. Dahl, J. Olsson, and A. Loiko, Investigations on the properties of the arithmetic derivative, (2011),  https://arxiv.org/abs/1108.4762
\bibitem{Dris} J. Dris, 
The abundancy index of divisors of odd perfect numbers, 
\textit{J. Integer Seq.} \textbf{15} (2012),  Art. 12.4.4.
\bibitem{FanUtev} J. Fan, S. Utev,  The Lie bracket and the arithmetic derivative
\textit{J. Integer Seq.} \textbf{23} (2020),  Art. 20.2.5.
\bibitem{Grun} O. Gr{\"u}n,  {\"U}ber ungerade vollkommene Zahlen, {\it Math. Z. } {\bf 55} (1952), 353--354 .
\bibitem{Kishore} M. Kishore, On odd perfect, quasiperfect and almost perfect numbers, {\it Math. Comp.} {\bf 36} (1981), 583-586. 
\bibitem{Klurman} O. Klurman,  Radical of perfect numbers and perfect numbers among polynomial values, \textit{ Int. J. Number Theory} \textbf{12} (2016), 585-591.
\bibitem{Luca and Pomerance} F. Luca, C. Pomerance, 
On the radical of a perfect number,
\textit{New York J. Math.} \textbf{16}  (2010), 23-30.
\bibitem{MurashkaGoncharenkoGoncharenko} V. Murashka, Viachaslau I.(BE-GOMLMP); A. Goncharenko, I. Goncharenko, Leibniz-additive functions on UFD's.
\textit{J. Integer Seq.} \textbf{23} (2020), Art. 20.10.3.
\bibitem{MHT} J. Merikoski, P. Haukkanen, T, Tossavainen, 
Complete additivity, complete multiplicativity, and Leibniz-additivity on rationals.
\textbf{Integers} \textbf{21} (2021), A33.
\bibitem{Nielsen} P. Nielsen,  Odd perfect numbers, Diophantine equations, and upper bounds,  \textit{Math. Comput.} \textbf{84}  (2015), 2549-2567.
\bibitem{Norton} K. Norton,   Remarks on the number of factors of an odd perfect number, {\it Acta Arith.} {\bf 6} (1960/1961), 365-374.
\bibitem{Pasten1} H.  Pasten,  A derivation of the infinitude of primes,
\textit{Amer. Math. Monthly} \textbf{131} (2024),  66–73.
\bibitem{Pasten2} H. Pasten,  Arithmetic derivatives through geometry of numbers,
\textit{Canad. Math. Bull.} 65 (2022).
\bibitem{Perisastri} M. Perisastri,  A note on odd perfect numbers, \textit{Math. Student} \textbf{26} (1959), 179-181.
\bibitem{Rosser Schoenfeld}J. Rosser and L. Schoenfeld,
Approximate formulas for some functions of prime numbers,
\textit{Ill. J. Math.} \textbf{6} (1962), 64-94.
 \bibitem{Servais} C. Servais, Sur les nombres parfaits, {\it Mathesis} \textbf{8} (1888), 92-93.
 \bibitem{Suryanarayana II} D. Suryanarayanam, On odd perfect numbers II,
\textit{Proc. Amer. Math. Soc.} \textbf{14} (1963), 896–904.
\bibitem{Suryanarayana III} D. Suryanarayana,
On odd perfect numbers III,
\textit{Proc. Am. Math. Soc.} \textbf{18} (1967), 933-939. 
\bibitem{Suryanarayana and Hagis} D. Suryanarayana and P. Hagis,
A theorem concerning odd perfect numbers, \textit{Fibonacci Q.} \textbf{8} (1970), 337-346, 374.
\bibitem{THMM} T. Tossavainen, P. Haukkanen, J. Merikoski, M. Mattila, We Can Differentiate Numbers, Too, The College Mathematics Journal, \textbf{55} (2024), 100-108.
\bibitem{Zelinskybig} J. Zelinsky, 
On the total number of prime factors of an odd perfect number, 
\textit{Integers} \textbf{21} (2021),  A76.
\bibitem{Zelinskyfollowup}  J. Zelinsky,
On the small prime factors of a non-deficient number,  \textit{Integers} \textbf{23} (2023),  A13.
\bibitem{ZelinskySecond} J. Zelinsky, 
Upper bounds on the second largest prime factor of an odd perfect number,  \textit{Int. J. Number Theory} \textbf{15} (2019), 1183-1189.
\bibitem{Zelinsky component compared to radical} J. Zelinsky, Must a primitive non-deficient number have a component not much larger than its radical? https://arxiv.org/abs/2005.12115
\end{thebibliography}
\end{document}